\documentclass[11pt,final]{amsart}
\usepackage[utf8]{inputenc}
\usepackage[T1]{fontenc}
\usepackage[letterpaper,centering]{geometry}
\usepackage{lmodern}
\usepackage{eucal}
\usepackage{mathrsfs}
\usepackage{amsmath,amssymb,amsfonts,amscd}
\usepackage{microtype,xcolor,enumitem,booktabs,needspace}
\definecolor{darkblue}{rgb}{0,0,0.3}
\definecolor{darkgreen}{rgb}{0,0.4,0}
\usepackage[colorlinks=true,linkcolor=darkblue,citecolor=darkgreen,
 urlcolor=darkblue,unicode,pdfborder={0 0 0}]{hyperref}
\hypersetup{pdftitle={Campana strong approximation on generalized Markoff surfaces},
 pdfauthor={Dasheng Wei},
 pdfsubject={Campana points, Markoff surfaces and the Brauer--Manin obstruction}}
\setlist[enumerate]{label={\upshape(\arabic*)},topsep=.7ex,leftmargin=*}
\setlist[itemize]{leftmargin=*}
\theoremstyle{plain}
\newtheorem{thm}{Theorem}[section]
\newtheorem{lem}[thm]{Lemma}
\newtheorem{prop}[thm]{Proposition}
\newtheorem{cor}[thm]{Corollary}

\theoremstyle{definition}
\newtheorem{defn}[thm]{Definition}
\newtheorem{rmk}[thm]{Remark}

\numberwithin{equation}{section}
\newcommand{\PP}{\mathbf P}

\newcommand{\A}{\mathbf A}
\newcommand{\OO}{\mathcal O}
\newcommand{\FF}{\mathbf F}
\newcommand{\ZZ}{\mathbf Z}
\newcommand{\QQ}{\mathbf Q}
\newcommand{\kbar}{\overline{k}}
\newcommand{\sX}{\mathscr X}
\newcommand{\sU}{\mathscr U}

\newcommand{\sD}{\mathscr D}

\newcommand{\sZ}{\mathscr Z}
\newcommand{\CC}{\mathcal C}
\newcommand{\Gm}{\mathbf G_{\mathrm m}}
\newcommand{\red}{\mathrm{red}}

\newcommand{\et}{\mathrm{\acute et}}
\DeclareMathOperator{\Br}{Br}

\DeclareMathOperator{\Spec}{Spec}

\DeclareMathOperator{\inv}{inv}
\DeclareMathOperator{\ord}{ord}
\newcommand{\sB}{\mathscr B}

\newcommand{\sV}{\mathscr V}

\title[Campana strong approximation]
{Campana strong approximation on generalized Markoff surfaces}
\author{Dasheng Wei}
\address{Academy of Mathematics and System Science, CAS, Beijing 100190,
P.\ R.\ China {\normalfont\itshape and} School of Mathematical Sciences,
University of CAS, Beijing 100049, P.\ R.\ China}
\email{dshwei@amss.ac.cn}
\date{}
\subjclass[2020]{Primary 14G05; Secondary 11G35, 14F22, 14J26}
\keywords{Campana points, generalized Markoff surfaces, Campana strong approximation,
Brauer--Manin obstruction}

\begin{document}

\begin{abstract}
For every smooth projective generalized Markoff surface $X_m$, every hyperplane
section $H$, and every integer $n\ge2$, we establish Campana strong
approximation with the Brauer--Manin obstruction for
$\bigl(X_m,(1-1/n)H\bigr)$.
\end{abstract}

\maketitle

\section{Introduction}

Let $k$ be a number field, fix $a,b,c\in k^\times$, and let
$m\in k\setminus\{0,4abc\}$. Consider the generalized Markoff surface
\begin{equation}\label{eq:projective}
 X_m:\quad (ax^2+by^2+cz^2)w-xyz=mw^3\subset\PP^3_k.
\end{equation}
Its boundary is the triangle $D=D_x+D_y+D_z$ with 
\begin{equation*}
 D_x=\{x=w=0\},\quad D_y=\{y=w=0\},\quad D_z=\{z=w=0\}. 
\end{equation*}
Writing $U_m=X_m\setminus D$, we have
\begin{equation}\label{eq:affine}
 U_m:\quad ax^2+by^2+cz^2-xyz=m.
\end{equation}
The surface $X_m$ is smooth when $m\notin\{0,4abc\}$.
The fixed coefficients $a,b,c$ are
suppressed from the notation $X_m$ and $U_m$. In the special case $a=b=c=1$, the
geometry and the Brauer group were studied in \cite {CTWX,LM}.

The classical Markoff equation \(x^2+y^2+z^2=3xyz\) first appeared in 1879 \cite{Markoff}. Its positive integral points carry an action of a discrete group and are generated by $(1,1,1)$.  Hurwitz studied its higher-dimensional analogues \cite{Hurwitz}.
Much subsequent work explores connections between these equations, counting problems, dynamical systems, and local‑global principles. Important results include strong approximation modulo primes \cite{BGS}, asymptotic counting for integral points \cite{GMR}, and relations to Painlevé dynamics on character varieties \cite{CL}.

For the family \(x^2+y^2+z^2-xyz=m\), Ghosh and Sarnak \cite{GS} developed reduction theory for integral solutions. They proved the integral Hasse principle
for almost all integer parameters and exhibited infinitely many
failures. Loughran and Mitankin \cite{LM}, together with Colliot-Th\'el\`ene, Xu, and the author \cite{CTWX}, computed the full Brauer group and further studied failures of the integral Hasse principle via the Brauer--Manin obstruction. Some twisted and number-field generalizations have been treated in \cite{Chen, Dao, KSS}.

Campana points \cite{Campana} interpolate between rational and
integral points. For a finite weight $n$, a positive boundary
multiplicity is permitted only when it is at least $n$. If $n$ is infinite, then Campana points recover $S$-integral points on the complement. 
Here we consider finite weights.
With the basic notion of Campana points in place, numerous related arithmetic results have been obtained.
A Manin-type conjecture and counting results for Campana points are
developed in \cite{PSTVA}, with further counting results in \cite{BY,PS}. Turning to approximation theory,
weak approximation for Campana points has been studied for projective spaces and quadrics in
\cite{NS,MNS,Wei26}. Failures of local--global principles for Markoff orbifold
pairs were studied by Mitankin and Uhlemann \cite{MU}.

We now return to the generalized surface over a number field $k$. Let $\Pi$ be a hyperplane in $\PP^3$,
and let $H=X_m\cap\Pi$ be the corresponding (reduced) hyperplane section. Set
$\Delta_H=(1-1/n)H$, where $2\le n<\infty$.
Fix a finite set $S\supseteq\Omega_k^\infty$ and a proper flat
integral model $\sX_m$ over $\OO_S$. In this paper, we follow
\cite[Definitions~2.2--2.4]{NS} for the local intersection multiplicity. 
Let $\CC_v(\sX_m,\Delta_H)$ be the local Campana space.
We equip
\[
 (X_m,\Delta_H)(\A_k)
 =\prod_v'\bigl(X_m(k_v),\CC_v(\sX_m,\Delta_H)\bigr)
\]
with the restricted-product topology, which we call the
\emph{Campana topology}. 

\begin{thm}\label{thm:main}
Let $a,b,c\in k^\times$, let $m\in k\setminus\{0,4abc\}$, and let
$X_m$ be the surface in \eqref{eq:projective}.
For every hyperplane section $H$ of $X_m$ and every integer
$n\ge2$, Campana strong approximation with the Brauer--Manin
obstruction holds for $\bigl(X_m,(1-1/n)H\bigr)$; that is,
\begin{equation*}
 \overline{(X_m\setminus H)(k)}
 =(X_m,\Delta_H)(\A_k)^{\Br(X_m)}.
\end{equation*}
The closure is taken in the Campana topology.
\end{thm}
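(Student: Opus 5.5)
We prove the inclusion ``$\supseteq$'' (the other inclusion is formal: Brauer classes pair trivially with rational points, the pairing is locally constant, and the Campana topology is finer than the product topology). The strategy is fibration-theoretic. We use the conic-bundle structures of $U_m$ given by the coordinate projections, for instance
\[
 (x,y,z)\mapsto x,\qquad by^2+cz^2-xyz=m-ax^2 .
\]
For fixed $x=t$ this fibre is an affine conic, in fact a twisted $\Gm$-torsor. Its discriminant is $t^2-4bc$, and its relevant norm quantity is $(m-ax^2)$ relative to $k(\sqrt{t^2-4bc})$. The point of working with the pair $(X_m,\Delta_H)$ rather than $(X_m,D)$ is that Campana conditions are imposed only along $H=X_m\cap\Pi$, and only with finite weight $n$. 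Away from $H$ we need only approximate rational points, i.e.\ points of $(X_m\setminus H)(k)$, without any integrality along $D$.

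\textbf{Step 1: reduction to a finite-level problem.}
Fix an adelic Campana point $(P_v)$ orthogonal to $\Br(X_m)$, and a basic open neighbourhood. This neighbourhood prescribes:
\begin{itemize}
\item approximation conditions at a finite set $T\supseteq S$ of places;
\item at $v\notin T$, membership in $\CC_v(\sX_m,\Delta_H)$, meaning that the intersection multiplicity with $\overline H$ is either $0$ or at least $n$.
\end{itemize}
By weak approximation with Brauer--Manin obstruction for the rational surface $X_m$ (which holds for conic bundle surfaces of this shape with few degenerate fibres, via Colliot-Thélène--Salberger or the fibration method), the $T$-part can be arranged. The genuinely new content is to control, for all $v\notin T$ simultaneously, the multiplicity with $H$. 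Write the linear form defining $\Pi$ as $\ell=\alpha x+\beta y+\gamma z+\delta w$. Then a rational point $P$ lies in the Campana set at $v\notin T$ iff $v(\ell(P)/\text{(scaling)})\in\{0\}\cup[n,\infty)$. We can choose the projection so that $\ell$ restricted to fibres is nonconstant; the degenerate cases, where $\Pi$ contains a fibre direction or $H$ contains a boundary line, are treated separately by choosing another coordinate projection.

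\textbf{Step 2: choose the base parameter.}
Take $t\in k$ close to $x(P_v)$ for $v\in T$, such that:
\begin{enumerate}
\item the fibre $U_{m,t}$ has local points satisfying the $T$-conditions, and
\item at places outside $T$, the fibre has points of the required Campana type.
\end{enumerate}
For this we take $t$ of the form $t=t_0+\pi^n s$ with controlled prime factorisation. Dirichlet/Chebotarev-type arguments (à la the twisted norm form method or Harpaz--Wittenberg for a single degenerate fibre pair) produce $t$ so that $m-at^2$ and $t^2-4bc$ have prime divisors only at ``good'' primes. The Brauer--Manin condition, via the formal lemma of Harari applied to the vertical classes in $\Br(X_m)$ (computed as quaternion algebras $(x^2-4bc,\,\cdot\,)$ as in \cite{CTWX,LM}), ensures the fibre's Hasse obstruction vanishes.

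\textbf{Step 3: solve on the fibre.}
The fibre is a torus torsor of dimension one, namely a norm equation $N_{K_t/k}(u)=m-at^2$. For such tori, strong approximation off one place holds with Brauer--Manin obstruction. We need a point $u$ with $\ell$ having valuations in $\{0\}\cup[n,\infty)$ everywhere outside $T$. Since $\ell$ on the fibre is a linear function of $(y,z)$, hence of $u,\bar u$, we impose that $\ell(u)$ be close to a local Campana solution at the finitely many bad places and be a unit at all others, except at one auxiliary prime where we force valuation exactly $n$. The latter is arranged by inserting an $n$-th power of a suitably chosen split prime into $u$.

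\textbf{Main obstacle.}
I expect the main difficulty to be Step 3 combined with Step 2: simultaneously controlling the valuations of $\ell(P)$ at infinitely many places, with only finite-weight Campana conditions, while keeping the fibre Brauer-orthogonal. I would first try to absorb all unwanted prime divisors of $\ell(P)$ into $n$-th powers by working with the parametrization $u=\lambda\mu^n$, reducing the problem to an integral strong approximation statement on an auxiliary variety (an $n$-th root cover of the fibre along $H$), and then apply strong approximation for tori there.
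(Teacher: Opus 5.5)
There is a genuine gap, and it sits in the case $H=D$, which your proposal does not handle.

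\textbf{The reformulation in Step~1 is not the Campana condition.} Membership in $\CC_v(\sX_m,\Delta_H)$ requires $n_v(\sB_{i,\lambda},P)\in\mathcal N_n$ for each $k_v$-irreducible component of $H$ separately, not $\ord_v\ell(P)\in\{0\}\cup[n,\infty)$. For $H=D$ (so $\ell=w$) there are three conditions, on $\min\{\ord_v\xi,\ord_v\omega\}$, $\min\{\ord_v\eta,\ord_v\omega\}$ and $\min\{\ord_v\zeta,\ord_v\omega\}$. A point with pole orders $(A,B,A+B)$ has multiplicities $(B,A,0)$, so $\ord_v\omega=A+B\ge n$ is far from sufficient.

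\textbf{Your fallback for $H=D$ is unavailable.} ``Choosing another coordinate projection'' when $H$ contains a boundary line cannot work, because $D$ contains all three lines. For the projection to $x$, the fibre $f^{-1}(\infty)=D_y+D_z$ lies in the boundary. If $1\le-\ord_v(t)<n$, a direct valuation check shows that every point of $C_t(k_v)$ has multiplicity in $[1,n-1]$ along $\sD_y$ or $\sD_z$, so $C_t$ has no local Campana point at $v$. Hence $t$ must itself be a Campana point of $(\PP^1,(1-1/n)\infty)$, while also satisfying conditions at the four non-split degenerate fibres. Your Chebotarev/formal-lemma sketch in Step~2 does not touch this multiple-fibre problem; the ansatz $t=t_0+\pi^n s$ says nothing about the denominators of $t$.

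\textbf{The missing idea is the Vieta involutions.} For $\Phi=\sigma_z\circ\sigma_y\circ\sigma_x$, pole orders of shape $(A,B,A+B)$ become $(A+2B,2A+3B,3A+5B)$. So $\Phi^N$ with $3^{N-1}\ge n$ sends every local point whose reduction avoids the three vertices of $D$ into $\CC_v(\sX_m,(1-1/n)D)$. It then suffices to take a rational point $Q$ from weak approximation with Brauer--Manin obstruction, and find $R$ near $\Phi^{-N}(Q)$ at $T$ whose reductions avoid the vertices outside $T$; this is easy on a single conic fibre.

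\textbf{For $H\neq D$, Steps~2--3 miss the actual local obstruction.} If $t$ reduces at $v$ to a non-split degenerate fibre whose node lies on $\overline H$, then every point of $C_t(k_v)$ reduces to that node and meets $\overline H$ with positive multiplicity. Nothing in your choice of $t$ or of $u$ controls whether multiplicity $\ge n$ is attainable there.

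\textbf{How the paper avoids this.} $D$ has two intersecting $k$-lines, one of them not contained in $H$, and these give conic bundles $f$ and $g$.
\begin{enumerate}
\item Weak approximation with Brauer--Manin obstruction supplies a rational point $R_0$. This comes from Salberger--Skorobogatov on the degree-four del Pezzo surface obtained by contracting a $k$-line, not from $k$-rationality of $X_m$.
\item $R_0$ is moved along its $g$-fibre to a point $R$ whose reductions avoid the nodes $Z_f$ at every $v\notin T$.
\item The $f$-fibre through $R$ is then a rational conic meeting $H$ in degree two, with local points whose reductions avoid $\overline H$ at every $v\notin T$.
\item Campana strong approximation on $\PP^1$ with at most two boundary points finishes the argument.
\end{enumerate}

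This also shows that Harari's formal lemma, and the torus and $n$-th-root-cover machinery of Step~3, are unnecessary. The fibre is rational because it contains $R$, and since the weights are finite no place has to be discarded. As written, Step~3 and your ``main obstacle'' paragraph are a programme rather than a proof.
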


Ordinary strong approximation behaves quite differently. Suppose that $k=\QQ$ and $a=b=c=1$, and let $U_m$ be the affine
variety defined by \eqref{eq:affine}.
Weak strong approximation always fails on $U_m$ \cite[Theorem~1.1]{CTWX}; hence
strong approximation with the Brauer--Manin obstruction off the infinite place also always fails on $U_m$
\cite[Corollary~1.2]{CTWX}. Thus, Theorem~\ref{thm:main} does not hold when $n=\infty$.

Any $k$-line induces a fibration of $X_m$ over $\PP^1$. Such fibrations play a key role in the proof of this theorem.  
Its proof splits into two parts. If $H\ne D$, we may choose two sides of $D$, which are
a pair of intersecting $k$-lines. Proposition~\ref{prop:twolines}
therefore settles this case directly by the fibration method.

 Let $f$ be the fibration defined by $D_x$. If $H=D$, then the fibre $f^{-1}(\infty)=D_y+D_z$ lies in the boundary, which may be regarded as having multiplicity $n$.
The corresponding fibration problem is therefore particularly difficult, since it has one multiple fibre and four additional nonsplit fibres. 
Instead, we use the action of Vieta involutions, which allows us to move any local point with reduction on $D$ arbitrarily close to the vertices of $D$; this observation allows us to complete the proof. The three vertices of $D$ are essential to the argument, so this method no longer applies if they are removed from $X_m$, although one may expect similar approximation properties after removing them, by arithmetic purity. 

In fact, our result applies more broadly. For a smooth cubic surface containing two distinct $k$-lines, Campana strong approximation with the Brauer--Manin
obstruction holds if $H$ does not contain both of these
lines; see Proposition~\ref{prop:twolines}. If the two lines are disjoint, then $H$ automatically fails to contain both. Hence,
Campana strong approximation holds for every $H$. Here the Brauer--Manin obstruction vanishes, since the surface is $k$-rational.  For a smooth
cubic surface containing a $k$-line $\ell$, we also prove the same result if $H$ avoids the nodes of all singular
fibres and $\ell\not \subset H$; see
Proposition~\ref{prop:cubic}. This shows that the approximation property holds for any $H$ in general position.

Once
Theorem~\ref{thm:main} has been established, its equal-weight boundary
case yields the following unequal-weight corollary.

\begin{cor}\label{cor:boundary}
For any integers $n_x,n_y,n_z\ge2$, let
\begin{equation}\label{eq:orbifold}
 \Delta_{\boldsymbol n}
 =\left(1-\frac1{n_x}\right)D_x+
 \left(1-\frac1{n_y}\right)D_y+
 \left(1-\frac1{n_z}\right)D_z.
\end{equation}
Campana strong approximation with the Brauer--Manin obstruction holds
for $(X_m,\Delta_{\boldsymbol n})$. 
\end{cor}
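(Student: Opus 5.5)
We deduce Corollary~\ref{cor:boundary} from the equal-weight case $H=D$ of Theorem~\ref{thm:main}. Set $N=\max(n_x,n_y,n_z)$ and $\Delta_N=(1-1/N)D$. The plan is to compare the two local Campana spaces. We show that, for all $v$ outside a finite set,
\[
\CC_v(\sX_m,\Delta_N)\subseteq \CC_v(\sX_m,\Delta_{\boldsymbol n}),
\]
and that the two closures, the closure of $U_m(k)$ and the Brauer--Manin set, sit compatibly inside these spaces.

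\emph{Inclusions of the spaces.} At a nonarchimedean $v\notin S$, a point $P\in X_m(k_v)\setminus D$ lies in $\CC_v(\sX_m,\Delta_{\boldsymbol n})$ exactly when each of the local intersection multiplicities $n_v(D_x,P)$, $n_v(D_y,P)$, $n_v(D_z,P)$ is either $0$ or at least the corresponding $n_\ast$. The key step is to check that membership in $\CC_v(\sX_m,\Delta_N)$ forces each of these three multiplicities separately to be $0$ or at least $N$. This needs care, because the definition in \cite{NS} for the reduced divisor $D$ uses the multiplicity of $P$ with $D$ itself. Since $D=D_x+D_y+D_z$, that multiplicity is $n_v(D,P)=n_v(D_x,P)+n_v(D_y,P)+n_v(D_z,P)$, and the condition $n_v(D,P)\ge N$ alone is a priori weaker than the componentwise condition. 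If the \cite{NS} definition is componentwise, which is typical for Campana points with strict normal crossings boundary, then the inclusion is immediate. If it is not, we first refine to a componentwise Campana condition of weight $N$. That space is an open-closed subset of $\CC_v(\sX_m,\Delta_N)$ containing $U_m(\OO_v)$-type points, and it is contained in $\CC_v(\sX_m,\Delta_{\boldsymbol n})$. Conversely, $\CC_v(\sX_m,\Delta_{\boldsymbol n})\subseteq X_m(k_v)$ holds trivially, and at each $v$ the topology on $\CC_v(\sX_m,\Delta_N)$ is the subspace topology from $\CC_v(\sX_m,\Delta_{\boldsymbol n})$, since both are open in $X_m(k_v)$ for the $v$-adic topology. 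Hence the restricted product $(X_m,\Delta_N)(\A_k)$ maps continuously into $(X_m,\Delta_{\boldsymbol n})(\A_k)$.

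\emph{Approximation.} Let $(P_v)\in(X_m,\Delta_{\boldsymbol n})(\A_k)^{\Br(X_m)}$ and let $W=\prod W_v$ be a basic open neighbourhood of it, with $W_v=\CC_v(\sX_m,\Delta_{\boldsymbol n})$ outside a finite set $T$. Since $\Br(X_m)/\Br(k)$ is finite and the local evaluation maps are locally constant, we may take $T$ large enough that every element of $\Br(X_m)$ is unramified outside $T$ on the model. We then need an adelic point of $(X_m,\Delta_N)(\A_k)^{\Br(X_m)}$ lying in $W$. Keep $P_v$ for $v\in T$, shrinking $W_v$ to a neighbourhood on which the Brauer pairing is constant. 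For $v\notin T$, replace $P_v$ by any point $Q_v\in\CC_v(\sX_m,\Delta_N)$; for instance $Q_v$ can be any $\OO_v$-integral point of $U_m$, which has multiplicity $0$ along $D$. Such points exist for almost all $v$ by the Lang--Weil estimates and Hensel's lemma; at the finitely many remaining places outside $T$, we put those places into $T$. Because the Brauer classes are unramified at $v\notin T$, both $P_v$ and $Q_v$ pair trivially with them. So the modified adelic point still lies in the Brauer--Manin set and belongs to $(X_m,\Delta_N)(\A_k)$. Theorem~\ref{thm:main} with $H=D$ and weight $N$ then gives $P\in U_m(k)$ in the corresponding neighbourhood in the $\Delta_N$-topology, and by the inclusion of the first step, $P\in W$.

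\emph{Reverse inclusion and the main obstacle.} The reverse inclusion $\overline{U_m(k)}\subseteq(X_m,\Delta_{\boldsymbol n})(\A_k)^{\Br(X_m)}$ is formal. Rational points of $U_m$ are $\Delta_{\boldsymbol n}$-Campana at almost all places, the Brauer--Manin set is closed, and global reciprocity kills the pairing on rational points. The main obstacle is the first step: the precise comparison of local Campana conditions for the reduced divisor $D$ versus its components, including the possible ramification at places where integral points of the model fail to exist.
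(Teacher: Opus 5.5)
Your proposal is correct and follows the paper's route: the paper notes $(1-1/M)D\ge\Delta_{\boldsymbol n}$ for $M=\max\{n_x,n_y,n_z\}$ and invokes Theorem~\ref{thm:main} together with Lemma~\ref{lem:monotonicity}(2), whose proof is exactly your inline argument of modifying the adelic point outside a large finite set, using finiteness of $\Br(X_m)/\Br_0(X_m)$. The step you flag as the main obstacle is not one here, because the paper defines the local Campana sets componentwise, on each $k_v$-irreducible component of each prime $B_i$, so $\CC_v(\sX_m,(1-1/M)D)\subseteq\CC_v(\sX_m,\Delta_{\boldsymbol n})$ is immediate; your fallback branch is therefore unnecessary, and as stated it would not suffice anyway, since refining infinitely many factors does not give an open set in the $\Delta_M$-topology.
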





The paper is organised as follows. In Section~\ref{sec:definitions} we define the Campana topology and Campana strong approximation, and we also establish some basic properties of this topology.
In Section~\ref{sec:criterion} we establish a two-conic-bundle
criterion by the fibration method.  Applying it to cubic surfaces, we treat the hyperplane sections of $X_m$ other than $D$. 
Section~\ref{sec:vieta} treats the hardest case, that of the boundary triangle, using Vieta
involutions.

\noindent\textbf{Acknowledgements.}
This work is supported by the National Natural Science Foundation
of China (Grant Nos.~12371014 and 12231009).
\section{Notation and preliminaries}\label{sec:definitions}

\subsection{Campana topology and strong approximation}\label{subsec:notation}


Let $k$ be a number field with a fixed algebraic closure $\kbar$. Denote by $\Omega_k$ the set of all places of $k$, by $\Omega_k^\infty\subset\Omega_k$ its archimedean places, and by $\mathbb A_k$ its ad\`ele ring. For each place $v$, let $k_v$ be the completion of $k$ at $v$. When $v$ is finite, let $\mathcal O_v$ be its valuation ring with the residue field $\mathbb F_v$,  $\pi_v$ a uniformizer, and  $\ord_v$ the normalized valuation. We set $\ord_v(0)=\infty$.
The finite set $S\subset \Omega_k$ is always assumed to contain $\Omega_k^\infty$, and we write $\mathcal O_S$ for the ring of $S$-integers. 

Given a $k$-variety $Y$, we write $\overline Y=Y\times_k\kbar$. Its Brauer group is
$\Br(Y)=H^2_{\et}(Y,\Gm)$,  and its algebraic Brauer group is $\Br_1(Y)=\ker\bigl(\Br(Y)\longrightarrow\Br(\overline Y)\bigr)$.
Let $\Br_0(Y)\subseteq\Br_1(Y)$ be the constant Brauer subgroup.

For a smooth $k$-variety $Y$ and a subgroup $B\subseteq\Br(Y)$, set
\[
 Y(\A_k)^B=
 \left\{(P_v)\in Y(\A_k):
 \sum_{v\in\Omega_k}\inv_v\alpha(P_v)=0
 \text{ for every }\alpha\in B\right\}.
\]
Here $Y(\A_k)$ carries its usual adelic topology; 
if
$Y$ is proper, this topology coincides with  the
product topology. 
Sometimes we abbreviate $Y(\A_k)^{\Br(Y)}$ as
$Y(\A_k)^{\Br}$. 


Let $X$ be a smooth, separated, geometrically integral $k$-variety,
and let
\[
 \Delta=\sum_{i\in I}\left(1-\frac1{n_i}\right)B_i,
 \qquad n_i\in\ZZ_{\ge2}\cup\{\infty\},
\]
where the $B_i$ are distinct prime divisors of $X$ and we set
$1/\infty=0$.

 Let $\sX$ be a separated flat
$\OO_S$-model of $X$, let $\sB_i$ be the
Zariski closure of $B_i$ in $\sX$, and let
\[
 \sD=\sum_{i\in I}\left(1-\frac1{n_i}\right)\sB_i.
\]
Then $(\sX,\sD)$ is called an $\OO_S$-model of $(X,\Delta)$.
Here $X$ is not assumed to be proper.

For any local integral point $x\in\sX(\OO_v)$, let
$\widetilde x:\Spec\OO_v\to\sX_{\OO_v}$ denote the corresponding
section.
For a closed subscheme
$\mathscr E\subset\sX_{\OO_v}$ with ideal sheaf
$\mathcal I_{\mathscr E}$, set
$J_{\mathscr E,x}=\widetilde x^{-1}\mathcal I_{\mathscr E}\cdot\OO_v$
and define the intersection multiplicity 
\begin{equation*}
 n_v(\mathscr E,x)=
 \begin{cases}
 \operatorname{length}_{\OO_v}(\OO_v/J_{\mathscr E,x}),
 &J_{\mathscr E,x}\ne0,\\
 \infty,&J_{\mathscr E,x}=0.
 \end{cases}
\end{equation*}
Thus, when the multiplicity is finite,
$J_{\mathscr E,x}=(\pi_v^{\,n_v(\mathscr E,x)})$.
If $\mathscr E$ is locally generated  by a function $f$ near $x$,
then we have $n_v(\mathscr E,x)=\ord_v(f(x))$. 

For any place $v\notin S$, we write
\[
B_i\times_k k_v=\bigcup_{\lambda\in I_i(v)}B_{i,\lambda},
\]
where the $B_{i,\lambda}$ are the $k_v$-irreducible components of $B_i$.
Let $\sB_{i,\lambda}$ denote their closures in
$\sX_{\OO_v}$. 
For $2\le n<\infty$, put
$\mathcal N_n=\{0\}\cup\ZZ_{\ge n}\cup\{\infty\}$, and set
$\mathcal N_{\infty}=\{0\}$. Let
$B=\sum_{i\in I}B_i$ and let $ X^\circ=X\setminus B$.
Following the local-component convention of \cite[Definition~2.4]{NS}, for any finite $S$, we set
\begin{align*}
 \CC_v(\sX,\Delta)&=\{x\in\sX(\OO_v):
 n_v(\sB_{i,\lambda},x)\in\mathcal N_{n_i}
 \text{ for all }i \in I\text { and }\lambda\in I_i(v)\}, &&v\notin S,\\
 \CC_v(\sX,\Delta)&=X(k_v),&&v\in S.
\end{align*}
These local Campana inequalities are imposed on each $k_v$-irreducible component separately, unlike the conditions in \cite[Definition~3.4]{PSTVA}, which are imposed on the full closure of each prime boundary $k$-divisor.
For any $\OO_S$-model $\sX$ and finite  $T\supseteq S$, put
\begin{equation*}
\mathcal A_T(\sX,\Delta)=\prod_{v\in T}X(k_v)\times\prod_{v\notin T}\CC_v(\sX,\Delta)
\end{equation*}
with the product topology. 
 When $X$ and its model are proper,
$\sX(\OO_v)=X(k_v)$, so we recover the local spaces of~\cite{NS}.

Define the \emph{Campana adelic space} by
\begin{equation*}
 (X,\Delta)(\A_k)
 =\prod_v'\bigl(X(k_v),\CC_v(\sX,\Delta)\bigr)
 =\bigcup_{T\supseteq S\ {\rm finite}}\mathcal A_T(\sX,\Delta).
\end{equation*}
The restricted-product topology, which we call the
\emph{Campana topology}, has a basis consisting of
\begin{equation*}
 \prod_{v\in F}W_v\times\prod_{v\notin F}\CC_v(\sX,\Delta),
 \qquad F\supseteq S\text{ finite},
\end{equation*}
where each $W_v\subseteq X(k_v)$ is open. As with the adelic space $X(\A_k)$, the Campana adelic space and its topology do not depend on the choice of 
an $\OO_S$-model, since any two integral models become isomorphic after removing finitely many
places. By the Lang--Weil estimate, $\sX^\circ(\OO_v)\neq \emptyset$ for almost all $v$. Hence the Campana adelic space is  dense in $X(\A_k)$, and its topology is finer than the subspace topology inherited from $X(\A_k)$.
If the boundary $\Delta$ is empty, it recovers $X(\A_k)$. Every rational point  $x\in X(k)$ lies in the Campana adelic space if all weights are finite. In fact, if $x$ lies on the boundary, the intersection multiplicity is either  $0$ or $\infty$ outside finitely many places. If $x\in X^\circ(k)$, such an $x$ extends to the model
and avoids the boundary at almost all places. Points in $X^\circ$ are called \emph{strict points}.

For any finite $T\subset\Omega_k$, the projection
$\operatorname{pr}^{T}$ omits the components at the places in $T$.
The image $\operatorname{pr}^{T}\bigl((X,\Delta)(\A_k)\bigr)$ carries
the topology induced by the restricted product
\[
 \prod_{v\notin T}'\bigl(X(k_v),\CC_v(\sX,\Delta)\bigr).
\]
If $\prod_{v\in T}X(k_v)\ne\varnothing$,  the projected image agrees with this restricted product.
Denote
\[
(X,\Delta)(\A_k)^{\Br(X)}
=(X,\Delta)(\A_k)\cap X(\A_k)^{\Br(X)}.
\]

\begin{defn}\label{def:strong} 
We say that $(X,\Delta)$ satisfies: 
\begin{itemize} 

	\item \emph{Campana strong
		approximation} if 
		$$\overline{X^\circ(k)}
		=(X,\Delta)(\A_k);$$
	\item 
	\emph{Campana strong approximation with the Brauer--Manin obstruction} if 
	$$\overline{X^{\circ}(k)}
	=(X,\Delta)(\A_k)^{\Br};$$
	\item \emph{Campana strong
	approximation off $T$} if 
$$\overline{\operatorname{pr}^{T}(X^\circ(k))}
=\operatorname{pr}^{T}((X,\Delta)(\A_k));$$
\item 
\emph{Campana strong approximation with the Brauer--Manin obstruction off $T$} if 
$$\overline{\operatorname{pr}^{T}(X^\circ(k))}
=\operatorname{pr}^{T}((X,\Delta)(\A_k)^{\Br}),$$
\end{itemize}
where each closure is taken with respect to the topology on the right-hand side.
\end{defn}


Our notion of Campana strong approximation differs from the strong approximation property in
	\cite[Definitions 2.15 and 2.23]{MNS}. 
In \cite{MNS}, strong approximation is defined for a fixed model.  The
restricted-product topology uses reference subsets
$\sU(\OO_v)$ with $\sU=\sX\setminus\bigcup_i\sB_i$.
All neighborhoods therefore require avoiding the boundary outside a
finite set of places. However, our topology permits positive boundary
intersection.
In fact,  our notion of Campana strong approximation
is equivalent to Campana weak approximation for every finite 
$S$ and every $\OO_S$-model; see \cite[Definition~2.7(ii)]{NS}.
Finally, the strict Brauer--Manin condition in \mbox{
	\cite[Definition~3.7]{MNS}
}\hskip0pt
is defined using $\Br(X^\circ)$, while we use $\Br(X)$.

If all weights are infinite, 
then $\CC_v(\sX,\Delta)=\sX^\circ(\OO_v)$ for $v\notin S$.
Campana strong approximation for $(X,\Delta)$ is
equivalent to ordinary strong approximation for
$X^\circ$.

%

The following statements are contained in \cite[Propositions~3.15 and~4.6]{Moerman}.

\begin{lem}\label{lem:open}
The space $\CC_v(\sX,\Delta)$ is open and closed in $X(k_v)$. 
For any dense open subset  $V$ of $X$, $\CC_v(\sX,\Delta)\cap V(k_v)$ is dense in $\CC_v(\sX,\Delta)$.
\end{lem}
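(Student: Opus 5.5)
We work one place $v\notin S$ at a time, using the description of $\CC_v(\sX,\Delta)$ as a finite intersection, over the finitely many pairs $(i,\lambda)$ with $i\in I$ and $\lambda\in I_i(v)$, of the sets
\[
E_{i,\lambda}=\{x\in\sX(\OO_v): n_v(\sB_{i,\lambda},x)\in\mathcal N_{n_i}\}.
\]
Since $\sX$ is separated, $\sX(\OO_v)$ is open in $X(k_v)$ (it is compact when $\sX$ is proper). For non-proper $\sX$ we cover $\sX_{\OO_v}$ by finitely many affine opens; each $\mathscr V(\OO_v)$ is open and compact, and $\sX(\OO_v)$ is their union, so it is open and closed. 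Openness of such a union is clear. For closedness we use that $\sX(\OO_v)=\bigcup_j \mathscr V_j(\OO_v)$ is a finite union of compact sets, hence compact and therefore closed in the Hausdorff space $X(k_v)$.

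It thus suffices to show that each $E_{i,\lambda}$ is open and closed in $\sX(\OO_v)$. Work locally on an affine open $\mathscr V\subset\sX_{\OO_v}$ where the ideal of $\sB_{i,\lambda}$ is generated by finitely many functions $g_1,\dots,g_r$. Then
\[
n_v(\sB_{i,\lambda},x)=\min_j \ord_v(g_j(x)),
\]
since $J_{\mathscr E,x}$ is generated by the $g_j(x)$. Each $x\mapsto\ord_v(g_j(x))\in\ZZ_{\ge0}\cup\{\infty\}$ is locally constant where finite. For $N<\infty$ the level set $\{\min_j\ord_v g_j=N\}$ is open and closed in $\mathscr V(\OO_v)$, and the set $\{\min\ge N\}$ is open and closed as well. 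Hence
\[
E_{i,\lambda}\cap\mathscr V(\OO_v)=\{\min=0\}\cup\{\min\ge n_i\}
\]
is open and closed. Here the set $\{\min\ge n_i\}$ includes the value $\infty$, which is exactly why $\infty\in\mathcal N_n$ is needed. For $n_i=\infty$ we get $\{\min=0\}$, which is also open and closed. Gluing over the finite affine cover (the sets $\mathscr V(\OO_v)$ are open and closed in $\sX(\OO_v)$) gives the first claim.

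For the density statement, take $x\in\CC_v$ and a neighbourhood $W$ of $x$. By the first part we may shrink $W$ so that $W\subset\CC_v$. We then need $W\cap V(k_v)\neq\varnothing$. This follows from the standard fact that for smooth $X$ over $k_v$ and a dense open $V$, the set $V(k_v)$ is dense in $X(k_v)$ for the analytic topology. Indeed, by the implicit function theorem $X(k_v)$ is locally a $k_v$-analytic manifold of dimension $\dim X$, and the complement $(X\setminus V)(k_v)$ is locally contained in the zero set of a nonzero analytic function, so it has empty interior. For $v\in S$ we have $\CC_v=X(k_v)$ and the same argument applies.

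The main point requiring care is the treatment of multiplicity $\infty$, that is, points lying on $\sB_{i,\lambda}$. These points are limits of points with large finite multiplicity, which is why $\{\min\ge n_i\}$ must be taken together with $\infty$ in order to be closed. A secondary point is that the $k_v$-components $B_{i,\lambda}$ are defined over $k_v$, so their closures are subschemes of $\sX_{\OO_v}$, which are locally finitely generated because $\sX_{\OO_v}$ is noetherian.
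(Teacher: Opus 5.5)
Your proposal is correct and follows essentially the same route as the paper: the paper also derives openness and closedness from the local constancy of the intersection multiplicity (via reduction modulo $\pi_v^{n}$), and obtains density from the implicit function theorem. Your version adds two details the paper leaves implicit: that $\sX(\OO_v)$ is compact and open, hence closed, in $X(k_v)$ for a non-proper model, and an explicit treatment of infinite multiplicity through $\min_j\ord_v(g_j(x))$.
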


\begin{proof} 
For $v\in S$, the set is simply $X(k_v)$, so no further argument is needed. If $v\notin S$,
for any $s\in \sX(\OO_v)$, the reduction $s \mod 
\pi_v^n$ is locally constant. For a finite weight $n_i$, failure of
the Campana condition means that the intersection multiplicity lies in
$\{1,\ldots,n_i-1\}$; for an infinite weight, we require that $\bar s$
does not lie in the boundary. Therefore, the local
Campana set is open and closed. By the implicit function theorem \cite[Proposition 3.5.73]{Poonen}, $\CC_v(\sX,\Delta)\cap V(k_v)$ is dense in $\CC_v(\sX,\Delta)$.
\end{proof}


\needspace{16\baselineskip}
The following comparison is
contained in
\cite[Propositions~4.12 and~4.13]{Moerman}.

\begin{prop}\label{prop:equivalence}
Campana strong approximation for $(X,\Delta)$ is equivalent to
Campana weak approximation for every finite
$S\supseteq\Omega_k^\infty$ and every $\OO_S$-model of $(X,\Delta)$;
see \cite[Definition~2.7(ii)]{NS}. The same equivalence holds
after imposing the Brauer--Manin obstruction throughout.
\end{prop}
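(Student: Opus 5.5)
We need to prove Proposition~\ref{prop:equivalence}: Campana strong approximation for $(X,\Delta)$ is equivalent to Campana weak approximation for every finite $S$ and every $\OO_S$-model, and likewise with the Brauer--Manin condition imposed throughout. Recall that Campana weak approximation for a fixed pair $(S,(\sX,\sD))$ asks that the Campana points
\[
(\sX,\sD)(\OO_S)=\{x\in X^\circ(k): x\in\CC_v(\sX,\Delta)\text{ for all }v\notin S\}
\]
be dense in $\prod_{v\in S}X(k_v)$. The plan is to compare basic open sets on both sides.

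\textbf{Strong approximation implies weak approximation.} Fix $S'$ and an $\OO_{S'}$-model $\sX'$. Since the Campana topology is independent of the model, we may compute it using $\sX'$ together with a set $T\supseteq S\cup S'$ outside which the two models agree. Given nonempty open sets $W_v\subseteq X(k_v)$ for $v\in S'$, we form
\[
\prod_{v\in S'}W_v\times\prod_{v\notin S'}\CC_v(\sX',\Delta).
\]
This set is open in the Campana topology: for $v\notin S'$ the factors $\CC_v(\sX',\Delta)$ are open by Lemma~\ref{lem:open}, and at almost all places they coincide with the reference sets. It is nonempty provided each $\CC_v(\sX',\Delta)$ is nonempty, which holds because $\sX'(\OO_v)$ meets $X^\circ$ at almost all places by Lang--Weil, and at the remaining places we may use the density statement of Lemma~\ref{lem:open}. (If some $\CC_v$ is empty, then there are no Campana points and both sides are empty.) Density of $X^\circ(k)$ now yields a strict rational point in this open set, that is, a Campana $\OO_{S'}$-point in $\prod_{v\in S'}W_v$. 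In the Brauer version we need the open set to meet $(X,\Delta)(\A_k)^{\Br}$. So we assume weak approximation is being tested against the closure of the image of that Brauer set, and intersect accordingly.

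\textbf{Weak approximation implies strong approximation.} Take a basic open set
\[
\prod_{v\in F}W_v\times\prod_{v\notin F}\CC_v(\sX,\Delta)
\]
meeting the Campana adelic space. The key step is to encode the conditions $W_v$ at the places $v\in F\setminus S$ into a new model. Two routes are available. The first is simply to enlarge $S$ to $F$: weak approximation for $(F,\sX\otimes\OO_F)$ yields $x\in X^\circ(k)$ with $x\in W_v$ for $v\in F$ and $x\in\CC_v$ for $v\notin F$. This already suffices, since the hypothesis allows every finite $S$. In the Brauer case, we use the same reduction with the Brauer-refined weak approximation statement for the enlarged set $F$.

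\textbf{Main obstacle.} The main obstacle is the bookkeeping: independence of the model, shrinking $W_v$ to the points that actually occur, and the nonemptiness caveats. The subtle point is the Brauer-set version, where we must check that the projection of the Brauer set to $\prod_{v\in F}X(k_v)$, intersected with the Campana conditions outside $F$, matches the set that weak approximation with Brauer--Manin obstruction refers to. We expect this to follow by unwinding the definitions, since $\Br(X)$ pairs continuously and is locally constant.
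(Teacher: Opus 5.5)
Your two directions follow the paper's proof. For weak $\Rightarrow$ strong you enlarge $S$ to $F$, as the paper does. For strong $\Rightarrow$ weak you use that $\prod_{v\in S'}W_v\times\prod_{v\notin S'}\CC_v(\sX',\Delta)$ is open in the Campana topology, which is the paper's observation that $\mathcal A_{S'}(\sX',\Delta)$ is open in $(X,\Delta)(\A_k)$.

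The problem is your definition of Campana weak approximation. You ask for density of the Campana $\OO_{S'}$-points in $\prod_{v\in S'}X(k_v)$. \cite[Definition~2.7]{NS}, and the paper's proof, ask instead for density in $\mathcal A_{S'}(\sX',\Delta)=\prod_{v\in S'}X(k_v)\times\prod_{v\notin S'}\CC_v(\sX',\Delta)$ with the product topology. With your version, strong $\Rightarrow$ weak needs $\CC_v(\sX',\Delta)\neq\varnothing$ for every $v\notin S'$, and your justification does not give this. Lang--Weil covers almost all $v$, but the density assertion of Lemma~\ref{lem:open} says nothing about nonemptiness at the remaining places. Emptiness does occur: take $X=\PP^1_k$ with $\Delta=0$, which satisfies strong approximation, and the model $\sX'=\PP^1_{\OO_{S'}}\setminus\PP^1_{\FF_v}$, for which $\sX'(\OO_v)=\varnothing$. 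Your parenthetical then fails. The Campana $\OO_{S'}$-points are empty, but $\prod_{v\in S'}X(k_v)$ is not, so your form of weak approximation fails for this model. With the formulation of \cite{NS}, the target $\mathcal A_{S'}(\sX',\Delta)$ is empty in this situation and the statement is vacuous. So strong $\Rightarrow$ weak reduces to the openness of $\mathcal A_{S'}(\sX',\Delta)$, and no nonemptiness argument is needed.

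Second, you do not prove the Brauer--Manin version; you only say you ``expect'' it to follow. With the \cite{NS} formulation no unwinding of projections or of the pairing is required. Weak approximation with the Brauer--Manin obstruction asks for density in $\mathcal A_{S}(\sX,\Delta)\cap X(\A_k)^{\Br}$. This set is open in $(X,\Delta)(\A_k)^{\Br}$. Conversely, every basic open subset of $(X,\Delta)(\A_k)^{\Br}$ becomes, after enlarging $S$ to $F$, an open subset of $\mathcal A_F(\sX,\Delta)\cap X(\A_k)^{\Br}$. So both of your arguments apply verbatim after intersecting with the Brauer set, which is all the paper's final sentence asserts.
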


\begin{proof} First, suppose that Campana strong approximation holds for $(X,\Delta)$.
For an $\OO_S$-model, $\mathcal A_S(\sX,\Delta)$ is an open subset of $(X,\Delta)(\A_k)$, so the density of $X^\circ(k)$ in $(X,\Delta)(\A_k)$ implies density in $\mathcal A_S(\sX,\Delta)$.  Conversely, given an open subset of $(X,\Delta)(\A_k)$, without loss of generality, we may assume  that it has the form $\prod_{v\in S}W_v\times\prod_{v\notin S}\CC_v(\sX,\Delta)$ for a given $\OO_S$-model. This is an open subset of $\mathcal A_S(\sX,\Delta)$. Hence Campana weak approximation for $S$ and this $\OO_S$-model yields a rational point of $X^\circ$ in this open subset, proving Campana strong approximation for $(X,\Delta)$. The same argument applies after intersecting with
the Brauer–Manin set.
\end{proof}

Let $\Delta_1\ge\Delta_2$ be Campana divisors on $X$ with weights in
$\ZZ_{\ge2}\cup\{\infty\}$. For a given $\OO_S$-model,
the local Campana conditions immediately give
\[
 \CC_v(\sX,\Delta_1)\subseteq\CC_v(\sX,\Delta_2).
\]

The argument in part~(1) below appears in
\cite[Proposition~4.5(1)]{Moerman}.

\begin{lem}\label{lem:monotonicity}
Let $T\subset\Omega_k$ be finite, possibly empty.
\begin{enumerate}
\item Campana strong approximation off $T$ for $(X,\Delta_1)$
implies Campana strong approximation off $T$ for
$(X,\Delta_2)$.
\item Assume that $\Br(X)/\Br_0(X)$ is finite.
Campana strong approximation with the Brauer--Manin obstruction
off $T$ for $(X,\Delta_1)$ implies the corresponding property
for $(X,\Delta_2)$.
\end{enumerate}
\end{lem}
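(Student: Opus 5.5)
The proof rests on two facts: the inclusion $(X,\Delta_1)(\A_k)\hookrightarrow(X,\Delta_2)(\A_k)$ is continuous, and its image is dense, including after imposing the Brauer–Manin condition. Strong approximation for $\Delta_1$ then transfers to $\Delta_2$ through the chain of inclusions.

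\emph{Set-up.} Since $\Delta_1\ge\Delta_2$, every component of $\Delta_2$ is a component of $\Delta_1$ with larger or equal weight. Hence $X\setminus\operatorname{Supp}\Delta_1\subseteq X\setminus\operatorname{Supp}\Delta_2$, so strict points for $\Delta_1$ are strict for $\Delta_2$. We fix one $\OO_S$-model $\sX$ of $X$ and use it for both divisors; this is allowed because the Campana adelic space does not depend on the model. Then $\CC_v(\sX,\Delta_1)\subseteq\CC_v(\sX,\Delta_2)$ for every $v$.

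\emph{Continuity.} Let $\iota$ denote the inclusion. The preimage under $\iota$ of a basic open set $\prod_{v\in F}W_v\times\prod_{v\notin F}\CC_v(\sX,\Delta_2)$ is $\prod_{v\in F}W_v\times\prod_{v\notin F}\CC_v(\sX,\Delta_1)$, because $\CC_v(\sX,\Delta_1)\subseteq\CC_v(\sX,\Delta_2)$. This preimage is a basic open set, so $\iota$ is continuous. The same holds after applying $\operatorname{pr}^T$.

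\emph{Density for part (1).} Take a nonempty basic open set $\mathcal W=\prod_{v\in F}W_v\times\prod_{v\notin F}\CC_v(\sX,\Delta_2)$ of the (projected) $\Delta_2$-space. By Lang–Weil, $\sX^\circ(\OO_v)\neq\emptyset$ for almost all $v$, so $\CC_v(\sX,\Delta_1)\neq\emptyset$ outside a finite set $E$. Put $F'=F\cup E$, and set $W'_v=W_v$ for $v\in F$ and $W'_v=\CC_v(\sX,\Delta_2)$ for $v\in E\setminus F$. These $W'_v$ are open by Lemma~\ref{lem:open}. Then $\prod_{v\in F'}W'_v\times\prod_{v\notin F'}\CC_v(\sX,\Delta_1)$ is a nonempty open subset of the $\Delta_1$-space contained in $\mathcal W$, so $\iota$ has dense image.

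\emph{Conclusion of part (1).} By hypothesis $\operatorname{pr}^T$ of the $\Delta_1$-strict rational points is dense in the $\Delta_1$-space. A continuous map carries dense subsets onto dense subsets of its image, and the image is dense. Hence these points, which are $\Delta_2$-strict, are dense in the $\Delta_2$-space, which is (1).

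\emph{Part (2), choice of places.} The main obstacle is controlling Brauer–Manin pairings when we modify adelic points at infinitely many places. Since $\Br(X)/\Br_0(X)$ is finite, choose representatives $\alpha_1,\dots,\alpha_r$; constant classes pair trivially with adelic points by reciprocity. Spread the $\alpha_j$ out to $\Br(\sX_{\OO_{S'}})$ for a finite $S'\supseteq S$. Then $\inv_v\alpha_j(x_v)=0$ for all $x_v\in\sX(\OO_v)$ and all $v\notin S'$. Also, each $\inv_v\alpha_j$ is locally constant on $X(k_v)$.

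\emph{Part (2), density of the Brauer set.} Take $P=(P_v)$ in $(X,\Delta_2)(\A_k)^{\Br}$, and a basic neighbourhood $\mathcal W$ of $\operatorname{pr}^T P$ as above. Enlarge $F$ to contain $S'\cup E\cup T$, shrinking each $W_v$ for $v\in F$ to an open set around $P_v$ on which every $\inv_v\alpha_j$ is constant; for $v\in T$ we take $W_v$ to be such a neighbourhood of $P_v$. Choose $Q_v\in W_v$ for $v\in F$, and $Q_v\in\CC_v(\sX,\Delta_1)$ for $v\notin F$. Then $\sum_v\inv_v\alpha_j(Q_v)=\sum_{v\in F}\inv_v\alpha_j(P_v)=\sum_v\inv_v\alpha_j(P_v)=0$. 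Here we use that for $v\notin F$ both $P_v$ and $Q_v$ lie in $\sX(\OO_v)$, where the invariants vanish.

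\emph{Conclusion of part (2).} The point $Q$ lies in $(X,\Delta_1)(\A_k)^{\Br}$ and $\operatorname{pr}^T Q\in\mathcal W$. So $\operatorname{pr}^T$ of the $\Delta_1$ Brauer set is dense in $\operatorname{pr}^T$ of the $\Delta_2$ Brauer set. Combining this with continuity of $\iota$ as in part (1) gives (2).
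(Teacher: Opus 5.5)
Your argument is essentially the paper's. For (1) you use continuity and density of $(X,\Delta_1)(\A_k)\hookrightarrow(X,\Delta_2)(\A_k)$; for (2) you spread finitely many representatives of $\Br(X)/\Br_0(X)$ over $\OO_{S'}$ and replace the local components outside a large finite set by points of $\CC_v(\sX,\Delta_1)$. You supply more detail than the paper does (Lang--Weil, local constancy of the invariants, putting $T$ into $F$).

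One step is misstated. The preimage of $\prod_{v\in F}W_v\times\prod_{v\notin F}\CC_v(\sX,\Delta_2)$ is not $\prod_{v\in F}W_v\times\prod_{v\notin F}\CC_v(\sX,\Delta_1)$, because a point of $(X,\Delta_1)(\A_k)$ may lie in $\CC_v(\sX,\Delta_2)\setminus\CC_v(\sX,\Delta_1)$ at finitely many $v\notin F$. The preimage is nevertheless open, so continuity still holds. Indeed, a point of the preimage lying in $\mathcal A_{T'}(\sX,\Delta_1)$ with $T'\supseteq F$ has the basic neighbourhood $\prod_{v\in F}W_v\times\prod_{v\in T'\setminus F}\CC_v(\sX,\Delta_2)\times\prod_{v\notin T'}\CC_v(\sX,\Delta_1)$ contained in the preimage, and the middle factors are open by Lemma~\ref{lem:open}.
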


\begin{proof}
The inclusion
$(X,\Delta_1)(\A_k)\hookrightarrow
(X,\Delta_2)(\A_k)$ is continuous and dense, and the same holds for the spaces away from $T$. Assertion (1) then follows from continuity.

For~(2), we only need to show the density of $(X,\Delta_1)(\A_k)^{\Br}$  in $(X,\Delta_2)(\A_k)^{\Br}$. We fix an integral $\OO_k$-model $\sX$ of $X$.
Choose a finite subset $B\subset \Br(X)$ that surjects onto
$\Br(X)/\Br_0(X)$.  Enlarge the finite set $S$ so that the elements of $B$ extend to $\Br(\sX_{\OO_S})$. Then
the evaluation of $B$ at $x_v$ vanishes for every $v \notin S$ and every $x_v\in \sX(\OO_v)$. For any point $(x_v)\in (X,\Delta_2)(\A_k)^{\Br}$, we can modify $x_v$ at all $v$ outside a sufficiently large finite set so that $x_v\in \CC_v(\sX,\Delta_1)$ and $(x_v)\in X(\A_k)^{\Br}$, i.e., $(x_v)\in (X,\Delta_1)(\A_k)^{\Br}$. This proves the density of $(X,\Delta_1)(\A_k)^{\Br}$  in $(X,\Delta_2)(\A_k)^{\Br}$, completing the proof.
%
%
%
\end{proof}



\subsection{Approximation on rational conics}

\begin{prop}\label{prop:curve}
Let $C$ be a smooth conic with a $k$-point, and let $E\subset C$ be
a reduced effective divisor of degree at most two. For  any assignment of finite weights to the closed points of $E$, the resulting pair satisfies
Campana strong approximation.
\end{prop}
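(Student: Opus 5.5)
Since $C$ has a $k$-point, $C\cong\PP^1_k$, and $E$ is one of: empty, one rational point, one degree-two closed point, or two rational points. By Proposition~\ref{prop:equivalence} it suffices to prove Campana weak approximation for every finite $S$ and every $\OO_S$-model. By Lemma~\ref{lem:monotonicity}(1), and because $\Br(\PP^1)=\Br_0$ plays no role, we may raise all weights to a common value $n$ (the largest). We may also choose a convenient model: $\PP^1_{\OO_S}$ with $E$ given by a form $g$ over $\OO_S$. The Campana adelic space and its topology do not depend on the model, so this costs nothing.

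\textbf{Case $E=\emptyset$.} This is weak approximation on $\PP^1$.

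\textbf{Case $E=\{\infty\}$.} Here $U=\A^1$, and a local point $x_v\in\PP^1(k_v)$ is Campana at $v\notin S$ exactly when either $x_v\in\OO_v$ or $\ord_v(1/x_v)\ge n$. Take a target basic open set $\prod_{v\in F}W_v\times\prod_{v\notin F}\CC_v$. Shrinking the $W_v$, and using Lemma~\ref{lem:open} to move off $\infty$, we may assume every $W_v$ lies in $k_v$ and is a small ball. For $v\in F\setminus S$ with $W_v$ near $\infty$, we ask for $x\equiv$ prescribed value with large pole order. The plan is to write $x=u/t^n$, where $t\in\OO_S$ is chosen with prescribed valuations at the finitely many $v\in F\setminus S$ and is otherwise a unit at all but one auxiliary prime $w$. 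We then use strong approximation on $\A^1$ (off a place of $S$, say an archimedean place $v_0$, or off an auxiliary place) to find $u$ integral outside $F\cup\{w\}$ and close to the targets. Then $x$ has poles only at places dividing $t$, and each such pole has order $\ge n$ because $t$ appears to the $n$th power and $u$ is a unit there. Approximation at the archimedean places and at $S$ comes from the freedom in $u$. To avoid losing control at $v_0$, we take both $t$ and $u$ via strong approximation off one auxiliary finite place $w\notin F$, where we arrange $\ord_w$ to be Campana: either $w$ divides $t$ to order $\ge1$ so the pole order is $\ge n$, or $x$ is integral at $w$.

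\textbf{Case $E=\{0,\infty\}$.} Here $U=\Gm$, and the condition at $v\notin S$ is $\ord_v(x)\in\{0\}\cup\ZZ_{\ge n}\cup\ZZ_{\le -n}$. We write $x=\epsilon\,a^n/b^n$ up to a factor: take $x=c\cdot a^n b^{-n}$ with $c$ an $S'$-unit for an enlarged $S'$. Each local target can be approximated as $c\,a^n b^{-n}$, because in $k_v^\times$ the set of such products is open and dense enough (Lemma~\ref{lem:open} lets us assume valuations are $0$ or at least $n$ in absolute value, and for valuations $\ge n$ we write $\ord=qn+r$ and absorb the remainder via an extra factor). The cleaner route is to parametrize by $\PP^1\times\PP^1\to\PP^1$, $(a,b)\mapsto a^n/b^n$ combined with a factor handled by weak approximation. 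We then choose $a,b$ coprime in $\OO_S$ by strong approximation on $\A^2$ minus a codimension-two set, which holds.

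\textbf{Case $E=P$ a degree-two point with field $L$.} Here $U$ is the complement of a norm-form conic: $x=(s:t)$ with $N(s,t)=N_{L/k}(s+t\theta)$. Places split or inert in $L$ behave differently, but the local-component convention gives conditions on each $k_v$-component. The plan is to write $s+t\theta=\gamma\,\beta^n$ with $\beta\in\OO_{L,S'}$ approximated by strong approximation on $R_{L/k}\A^1$. This produces valuations divisible by $n$ at each prime of $L$, hence at each component.

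The main obstacle is the bookkeeping at places in $F\setminus S$ where the target is near the boundary with a specific multiplicity that is not a multiple of $n$, together with the auxiliary place needed to make strong approximation available. I expect to handle this by a prescribed finite twist $\gamma$ (respectively $c$, $t$) whose valuations match the targets exactly, chosen via Dirichlet/Chebotarev so that its remaining prime divisors occur only at harmless places.
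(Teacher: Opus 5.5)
\textbf{There is a genuine gap.} The step you flag at the end as the main obstacle is where the real content lies, and the one mechanism you spell out fails.

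In the case $E=\{\infty\}$ you obtain $u$ by strong approximation off the auxiliary place $w$, which by definition gives no control of $\ord_w(u)$. With $\ord_w(t)=j\ge1$ you get $\ord_w(x)=\ord_w(u)-nj$. This lies in $\{-(n-1),\dots,-1\}$ whenever $n(j-1)<\ord_w(u)<nj$ (for instance $n=2$, $j=1$, $\ord_w(u)=1$), so $x$ is then not Campana at $w$. Your dichotomy ``$w$ divides $t$, so the pole order is $\ge n$, or $x$ is integral at $w$'' overlooks this, and the claim that $u$ is a unit at the primes of $t$ is asserted but never arranged.

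In the cases $E=\{0,\infty\}$ and $E$ a degree-two point, the existence of a twist $c$ (resp.\ $\gamma$) with the right local behaviour on $F$ and only harmless extra primes is left as an expectation. Phrases such as ``open and dense enough'' and ``absorb the remainder via an extra factor'' do not supply it.

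The missing observation is that an $n$-th power times a local unit is automatically Campana. Hence plain weak approximation suffices, and no integrality, coprimality, Chebotarev or strong approximation is needed.

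\emph{Rational boundary points.} For $E=\{\infty\}$ (resp.\ $\{0,\infty\}$) with the model $\PP^1_{\OO_S}$, the condition at $v\notin S$ is $\ord_v(x)\ge0$ or $\ord_v(x)\le-n$ (resp.\ $\ord_v(x)=0$ or $|\ord_v(x)|\ge n$). Both sets contain $n\ZZ$. Given targets $y_v\in k_v^\times$ for $v\in F$:
\begin{enumerate}
\item choose $c\in k^\times$ close to $y_v$ on $F$;
\item let $B$ be the finite set of $v\notin F$ with $\ord_v(c)\ne0$;
\item by weak approximation on $F\cup B$, choose $z\in k^\times$ close to $1$ on $F$ with $\ord_v(c)+n\ord_v(z)\ge n$ for $v\in B$.
\end{enumerate}
Then $x=cz^n$ lies in the given open set, since $\ord_v(x)\in n\ZZ$ for $v\notin F\cup B$.

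\emph{Degree-two point.} If $E$ is a point with residue field $L$, argue the same way in $L^\times$ with $\xi=\gamma\beta^n=s+t\theta$ and $x=(s:t)$, after adding finitely many bad places to $F$. At good inert places no $k_v$-point reduces to $E$. At a good split place with places $w_1,w_2$ of $L$, the two multiplicities are $\max\{0,d\}$ and $\max\{0,-d\}$, where $d=\ord_{w_1}(\xi)-\ord_{w_2}(\xi)$. So only the finitely many split places where $\gamma$ is not a unit impose a condition on $\beta$, and weak approximation in $L$ meets it.

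For comparison, the paper does not argue directly. It quotes Nakahara--Streeter [NS, Propositions~3.5 and~3.15] for Campana weak approximation for every $S$ and every model, and concludes by Proposition~\ref{prop:equivalence}.
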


\begin{proof}
Identify $C$ with $\PP^1_k$. If $E$ is empty, the assertion reduces
to ordinary weak approximation. Otherwise, a reduced divisor of degree
at most two is supported either on at most two $k$-points or on a single
closed point of degree two. Campana weak approximation for every $S$ and
every model follows from \cite[Propositions~3.5 and~3.15]{NS} (or from \cite[Theorem~1.1]{Wei26}). The claim now follows from Proposition~\ref{prop:equivalence}.
\end{proof}


\begin{lem}\label{lem:transfer}
Suppose $(X,\Delta)$ has finite weights. Let $W$ be an open subset of $(X,\Delta)(\A_k)$. Suppose that there exists a smooth rational conic $C\subset X$, not contained in $B$, satisfying the following conditions: 
\begin{itemize}
	\item $(C\cap B)_{\red}$ has degree at most two;
	\item $C(\A_k)\cap W \neq \emptyset $.
\end{itemize}

Then there is a
strict rational point of $C$ lying in $W$.
\end{lem}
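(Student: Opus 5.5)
The plan is to reduce to Proposition~\ref{prop:curve} applied to the pair $(C,\Delta_C)$, where $E=(C\cap B)_{\red}$ and each closed point of $E$ receives a weight chosen so that Campana points of $C$ land inside Campana points of $X$.

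\emph{Choosing the model and weights.} Take the closure $\CC$ of $C$ in $\sX$; it is an $\OO_S$-model of $C$. After enlarging $S$, arrange three things.
\begin{itemize}
\item $\CC$ is smooth over $\OO_S$.
\item For $v\notin S$, the components $\sB_{i,\lambda}$ restrict on $\CC_{\OO_v}$ to divisors supported on the closure of $E$.
\item Distinct points of $E$ have disjoint closures.
\end{itemize}
For a point $e\in E$, let $m_e$ be the largest multiplicity with which any $B_i$ (or any local component $B_{i,\lambda}$) meets $C$ at a geometric point over $e$, and let $N=\max_i n_i$. Give $e$ the finite weight $n_e=N\cdot\max(1,\max_e m_e)$; any sufficiently large finite weight will do.

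\emph{Checking that Campana points of $C$ are Campana points of $X$.} For $x\in\CC(\OO_v)$ with $v\notin S$, restricting the ideal of $\sB_{i,\lambda}$ to $\CC$ shows
\[
n_v(\sB_{i,\lambda},x)=\sum_e c_{e}\, n_v(\overline{e}_\mu,x),
\]
where $\overline{e}_\mu$ runs over the local components of the closure of $e$ and the $c_e\ge0$ are bounded intersection multiplicities. Because the closures of distinct points of $E$ are disjoint, at most one term is nonzero. So if $x$ is Campana for the weight $n_e$ at $e$, each $n_v(\sB_{i,\lambda},x)$ is $0$, $\infty$, or at least $n_e\ge n_i$. (The multiplicity $c_e\ge1$ only increases the value.) Hence $\CC_v(\CC,\Delta_C)\subseteq\CC_v(\sX,\Delta)\cap C(k_v)$. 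Both sides carry compatible topologies, so the Campana adelic space of $(C,\Delta_C)$ maps continuously into $(X,\Delta)(\A_k)$.

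\emph{Producing the point.} Let $W'$ be the preimage of $W$ in $(C,\Delta_C)(\A_k)$; it is open. The main obstacle is showing $W'\neq\emptyset$, because the hypothesis only gives a point $(P_v)\in C(\A_k)\cap W$, which is Campana for $X$ but not necessarily for the new weights on $C$. To handle this, shrink $W$ to a basic open set $\prod_{v\in F}W_v\times\prod_{v\notin F}\CC_v(\sX,\Delta)$ containing $(P_v)$. Keep $P_v$ for $v\in F$, and for $v\notin F\cup S$ replace $P_v$ by a point of $\CC(\OO_v)$ that avoids $\overline{E}$ modulo $\pi_v$. Such points exist for almost all $v$ by Lang--Weil, since $C\cong\PP^1$ and $E$ has degree at most two. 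At the finitely many remaining places, instead enlarge $F$, using that $\CC_v(\sX,\Delta)$ is open and contains $P_v$.

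The resulting adelic point lies in $W'$. By Proposition~\ref{prop:curve}, $(C,\Delta_C)$ satisfies Campana strong approximation, so strict rational points of $C$ (points off $E$, hence off $B$) are dense in its Campana adelic space. Therefore some strict rational point of $C$ lies in $W'$, and its image lies in $W$.
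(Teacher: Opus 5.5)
Your proposal is correct and follows the paper's route. You assign finite weights to the points of $(C\cap B)_{\red}$ so that $(C,E)(\A_k)\to(X,\Delta)(\A_k)$ is continuous, and then invoke Proposition~\ref{prop:curve}; the paper takes the weight of a point to be the maximal $n_i$ among the components of $B$ through it, which already suffices by your observation that $c_e\ge1$ only increases multiplicities, and your explicit modification of $(P_v)$ at almost all places, showing that the preimage of $W$ is nonempty, supplies the detail the paper compresses into the word ``Hence''.
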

\begin{proof}
	To each closed point of $C\cap B$, we assign the maximal weight among the components of $B$ containing it, and we denote the resulting boundary on $C$ by $E$. It is clear that the natural map $(C,E)(\A_k) \to (X,\Delta)(\A_k)$ is continuous. Hence $C(\A_k)\cap W$ contains a nonempty open subset of $(C,E)(\A_k)$, and the assertion follows from Proposition~\ref{prop:curve}.
%
\end{proof}

\section{A criterion for conic bundles and cubic surfaces}
\label{sec:criterion}

Let $X$ be a smooth, proper, geometrically rational surface with a conic fibration
$f:X\to\PP^1_k$. Assume that every geometrically
singular fibre consists of two distinct lines. 
Let $Z_f$ be the
finite  reduced scheme of their nodes.  

Let $H$ be a reduced effective divisor. Fix a proper $\OO_S$-model $\sX$ of $X$, and let 
$\sZ_f$ be the closure of $Z_f$. We always enlarge $S$ so that:
\begin{itemize}
		\item the model
	$\sX_{\OO_S}$ is smooth over $\OO_S$, 
	\item  $f$ extends to an $\OO_S$-morphism
	$\widetilde f:\sX_{\OO_S}\to\PP^1_{\OO_S}$,
	\item 
	$\sZ_f$ is finite \'etale over $\OO_S$, 
	\item  the reductions of  singular fibres of $\widetilde f$  are pairs of distinct lines
	whose nodes are  the reductions of $Z_f$, 
	\item if $H$ contains  no fibre of $f$,  no fibre is contained in the closure of $H$,
	\item if $H\cap Z_f=\varnothing$, their reductions do not meet,
	\item if a split fibre component is not contained in $H$, then it contains a smooth $\FF_v$-point outside~$H$. 
\end{itemize}

\begin{lem}\label{lem:localfibres}
Let $H$ be a reduced effective divisor containing no fibre of $f$,
and assume that its restriction to the generic fibre is reduced of
degree at most two. 
Let $C$ be  
a smooth $k$-fibre  containing a point $R\in C(k)$. For any $v\notin S$,  if
\[
\bar R_v\notin\sZ_f(\FF_v)
\qquad\text{or}\qquad H\cap Z_f=\varnothing,
\]
then there exists $P_v\in C(k_v)$ whose reduction lies outside $H$.
%
\end{lem}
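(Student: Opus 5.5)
The idea is to use Hensel's lemma on the closure $\mathscr C$ of $C$ in $\sX_{\OO_S}$. Lifting is possible as soon as the reduction $\mathscr C_v$ has a smooth $\FF_v$-point outside $H$. The work is to produce such a point in each case.

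\textbf{Reduction of the fibre.} Write $t_0=f(C)\in\PP^1(k)$, and let $\mathscr C$ be the fibre of $\widetilde f$ over the closure of $t_0$ in $\PP^1_{\OO_S}$. Since $R\in C(k)$ and $\sX$ is proper, $R$ extends to an $\OO_v$-point, and $\bar R_v$ lies in the special fibre $\mathscr C_v=\widetilde f^{-1}(\bar t_0)$. Since $\sX_{\OO_S}$ is smooth over $\OO_S$ and $\widetilde f$ is a conic bundle, $\mathscr C_v$ is one of two things. Either it is a smooth conic, or, if $\bar t_0$ is the reduction of a singular fibre, it is a pair of distinct lines meeting at a point of $\sZ_f(\FF_v)$. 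By our choice of $S$, no fibre lies in the closure of $H$, so no component of $\mathscr C_v$ lies in $\overline H$.

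\textbf{Counting points outside $H$.} Because $H$ restricts to a divisor of degree at most two on the generic fibre, $\overline H\cap\mathscr C_v$ has degree at most two on $\mathscr C_v$.

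If $\mathscr C_v$ is a smooth conic, it has an $\FF_v$-point, for instance $\bar R_v$, so $\mathscr C_v\cong\PP^1_{\FF_v}$. It has $q_v+1\ge3$ points, at most two of which lie on $\overline H$. So there is a smooth point outside $\overline H$.

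If $\mathscr C_v$ is a pair of lines, there are two subcases.
\begin{itemize}
\item Suppose $\bar R_v\notin\sZ_f(\FF_v)$. Then $\bar R_v$ lies on exactly one line $L$. That line is defined over $\FF_v$, since it is the unique component through an $\FF_v$-point, so the fibre is split. By the last enlargement condition on $S$, $L$ contains a smooth $\FF_v$-point outside $H$. Alternatively, one can count directly: $L\cong\PP^1$ meets $\overline H$ in degree at most two and also contains the node, giving at most three excluded points among $q_v+1$. The enlargement condition is what handles small $q_v$.
\item Suppose $H\cap Z_f=\varnothing$ and $\bar R_v$ is the node. Then the reductions of $H$ and $Z_f$ are disjoint, so the node is not on $\overline H$. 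If the lines are conjugate, their only $\FF_v$-point is the node, which is not smooth, so Hensel's lemma does not apply directly. In that case I would argue on $\mathscr C$ over $\OO_v$ itself: $C$ is smooth over $k_v$ with a $k_v$-point, and points of $C(k_v)$ near $R$ in the $v$-adic topology reduce to $\bar R_v$, which is the node and lies outside $\overline H$. So one can simply take $P_v=R$, since its reduction already avoids $H$. More generally, in every case where $\bar R_v\notin\overline H$, taking $P_v=R$ suffices.
\end{itemize}

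\textbf{Lifting.} Once a smooth $\FF_v$-point $\bar P$ of $\mathscr C_v$ outside $\overline H$ is found, the closure $\mathscr C$ is smooth at $\bar P$ over $\OO_v$. Indeed, $\mathscr C_v$ is smooth there and $\mathscr C$ is flat, being a fibre of a flat conic bundle over an $\OO_v$-point of $\PP^1$. Hensel's lemma then lifts $\bar P$ to $P_v\in C(k_v)$ with reduction $\bar P\notin H$.

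\textbf{Main obstacle.} I expect the main obstacle to be the case where $\mathscr C_v$ is singular and $\bar R_v$ is the node. That is exactly where the dichotomy in the hypothesis is needed, and the observation that $R$ itself works there is the key point.
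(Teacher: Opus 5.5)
Your strategy is the paper's. In the smooth and split cases you Hensel-lift a smooth $\FF_v$-point of the reduced fibre lying off $\overline H$. When the reduction is a non-split pair of lines, $\bar R_v$ must be the node, so the hypothesis forces $H\cap Z_f=\varnothing$ and $P_v=R$ works. Splitting the singular case by whether $\bar R_v$ is the node, rather than by split versus non-split, is equivalent.

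One step is wrong, however. You claim that no component of $\mathscr C_v$ lies in $\overline H$. The enlargement condition on $S$ only keeps whole fibres out of $\overline H$, and $H$ may contain a line of a singular fibre.

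For example, in the intersecting case of Proposition~\ref{prop:twolines}, take a plane through $\ell_2$ other than the plane of $\ell_1,\ell_2$. It cuts out an admissible $H$, since it contains no $f$-fibre, and $\ell_2\subset H$ is a component of the $f$-fibre $\ell_2+\ell_3$. Suppose the smooth fibre $C$ reduces at $v$ onto the reduction of $\ell_2+\ell_3$, and $\bar R_v$ is a non-nodal point of the reduction of $\ell_2$. Then the line $L$ through $\bar R_v$ in your first subcase is contained in $\overline H$. Neither the enlargement condition nor your point count then gives a point of $L$ off $\overline H$.

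The fix is the one the paper uses ("a component not contained in $H$"). You have shown that $L$ is defined over $\FF_v$, so the other component $L'$ is too, and the fibre is split. Since $\mathscr C_v\not\subset\overline H$, at least one of $L,L'$ is not contained in $\overline H$. Apply the last enlargement condition to that component, then lift by Hensel's lemma as you describe. With this correction the proof is complete.
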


\begin{proof}

The special fibre of the closure of $C$ is either smooth or a pair of
distinct lines. In the smooth case, it has a rational point outside $H$;
in the split case, a component not contained in $H$ has a smooth
rational point outside $H$. These points lift to the chosen fibre by
Hensel's lemma, applied where $f$ is smooth.

A non-split singular conic over the residue field has only one rational
point, namely its node. Hence the reduction of $R$ must be the node.
This is impossible when $R$ avoids $Z_f$. If instead $H$ avoids $Z_f$,
then the section defined by $R$ itself avoids $H$. This covers all cases.
\end{proof}

Suppose that  $X$ has another conic fibration $g$ over $\PP^1_k$. Let $A$ be the $g$-image of $Z_f$ in $\PP^1_k$.  Enlarge $S$ so that the corresponding properties stated for $f$ also hold for $g$, the closure  $\mathscr A $ of $A$ is \'etale over $\OO_S$ and $\widetilde g(\sZ_f)\subset\mathscr A$.

\begin{lem}\label{lem:twoavoid}
Suppose that $g:X\to\PP^1_k$ is a second conic bundle. Let $A=g(Z_f)\subset \PP^1_k$. For every $a\in A$, suppose that the fibre
$g^{-1}(a)$ is either smooth or a pair of lines defined over $k(a)$.
Then, for any $v\notin S$ 
and any $R_0\in X(k)$ lying on a
smooth $g$-fibre $C_s$ with $s\notin A$, there is a point
$R\in C_s(k)$ arbitrarily close to $R_0$ at the places in $S$, whose reductions avoid
$Z_f$ at all $v\notin S$.
\end{lem}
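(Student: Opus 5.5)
The plan is to use that $C_s\cong\PP^1_k$ and that the condition ``the reduction avoids $\sZ_f$'' is automatic at all but finitely many places. Weak approximation on $C_s$ at a suitable finite set of places then gives $R$.

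\emph{Step 1: reduction to finitely many places.} Since $C_s$ is smooth with the $k$-point $R_0$, it is isomorphic to $\PP^1_k$. Let $\tilde s\in\PP^1(\OO_v)$ be the section defined by $s$. For $R\in C_s(k)$ and $v\notin S$, the section $\widetilde R$ lies in $\widetilde g^{-1}(\tilde s)$. Suppose $\bar R_v\in\sZ_f(\FF_v)$. Then $\bar s_v=\widetilde g(\bar R_v)\in\widetilde g(\sZ_f)\subset\mathscr A$, so $\bar s_v\in\mathscr A(\FF_v)$. Since $s\notin A$, this happens only for $v$ in a finite set $T$ of places outside $S$: those where $s$ is congruent to some point of $A$. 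For $v\notin S\cup T$ nothing needs to be done, whatever $R\in C_s(k)$ is.

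\emph{Step 2: local points at $v\in T$.} Fix $v\in T$, and let $\bar a=\bar s_v\in\mathscr A(\FF_v)$ with $a\in A$. Since $\mathscr A$ is étale over $\OO_S$, $\bar a$ comes from a closed point $a$ whose residue field $k(a)$ maps to $\FF_v$. By hypothesis and the choice of $S$, the special fibre $\widetilde g^{-1}(\bar a)$ is therefore either a smooth conic over $\FF_v$ or a pair of distinct lines, each defined over $\FF_v$. In either case it has at least $\#\FF_v$ smooth $\FF_v$-points. The reductions of $Z_f$ contribute at most $\deg Z_f$ points. After enlarging $S$ at the outset so that $\#\FF_v>\deg Z_f$ for all $v\notin S$, we can pick a smooth $\FF_v$-point $\bar P$ of $\widetilde g^{-1}(\bar s_v)$ outside $\sZ_f$. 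Since $\widetilde g$ is smooth at $\bar P$, Hensel's lemma lifts $\bar P$ to an $\OO_v$-point of $\widetilde g^{-1}(\tilde s)$, that is, to a point $P_v\in C_s(k_v)$ with reduction $\bar P$. The set of points of $C_s(k_v)$ whose reduction avoids $\sZ_f$ is open, by local constancy of reduction, and it contains $P_v$.

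\emph{Step 3: weak approximation.} Weak approximation on $C_s\cong\PP^1_k$ at the finite set $S\cup T$ produces $R\in C_s(k)$ that is close to $R_0$ at the places in $S$ and lies in the open neighbourhood of $P_v$ from Step 2 for each $v\in T$. By Steps 1 and 2, the reduction of $R$ avoids $\sZ_f$ at every $v\notin S$.

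The main point needing care is Step 2. There we need the hypothesis on the fibres $g^{-1}(a)$: a non-split singular fibre would have only its node as an $\FF_v$-point, and this node could lie in $\sZ_f$. We also need the enlargement of $S$ so that the residue fields are large enough for a smooth point outside $\sZ_f$ to exist.
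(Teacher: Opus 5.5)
Your proposal is correct and follows the paper's proof step for step. The steps are: finiteness of the set $T$ of places with $\bar s_v\in\mathscr A$; Hensel lifts at $v\in T$ of smooth residue points outside $\sZ_f$; weak approximation on $C_s\simeq\PP^1_k$; and automatic avoidance at $v\notin S\cup T$ via $\widetilde g(\sZ_f)\subset\mathscr A$. Your counting step (enlarging $S$ so that $\#\FF_v>\deg Z_f$) usefully makes explicit the existence of a smooth $\FF_v$-point outside $\sZ_f$ on the residue fibre, which the paper simply asserts.
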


\begin{proof}
%
Let $T=\{v\notin\text{ S}: \bar s_v \in \mathscr A\}$,  which is a finite set because $s\notin A$.  For any $v\in T$, the
residue fibre is smooth or split, so we may choose a smooth $\FF_v$-point outside $\sZ_f$. On the fibre over~$\bar s$, Hensel's lemma then lifts this point to $Q_v\in C_s(k_v)$.

The smooth conic $C_s$ contains $R_0$, so $C_s\simeq\PP^1_k$.
Weak approximation on $C_s$ gives a point $R\in C_s(k)$ arbitrarily
close to $R_0$ at $v\in S$ and sufficiently close to \(Q_v\) that \(\bar R=\bar Q_v\) at $v\in T$. If $v\notin S\cup T$,  then the reduction $\bar s_v$ of $s$ lies outside
$\mathscr A$. Therefore, every point of $C_s(k_v)$ has reduction outside
$\sZ_f$, since $\widetilde g(\sZ_f)\subset\mathscr A$. 
The reduction of $R$ therefore avoids
$Z_f$ at every place outside $S$.
\end{proof}

\begin{thm}\label{thm:two}
Assume that $X(k)$ is dense in $X(\A_k)^{\Br(X)}$ and that $f$ and
$g$ satisfy the hypotheses of Lemma~\ref{lem:twoavoid}. Let $H$ be a
reduced effective divisor containing no fibre of $f$. Suppose that
the  restriction of $H$ to a general $f$-fibre is  of degree at most two. For every
integer $n\ge2$, Campana strong approximation with the Brauer--Manin
obstruction holds for $\bigl(X,(1-1/n)H\bigr)$.
\end{thm}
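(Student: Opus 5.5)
Let $\Delta=(1-1/n)H$ and $X^\circ=X\setminus H$. The inclusion $\overline{X^\circ(k)}\subseteq(X,\Delta)(\A_k)^{\Br(X)}$ is formal. Rational points lie in the Campana adelic space because all weights are finite. The Brauer pairing is continuous for the Campana topology, which is finer than the adelic one, and it vanishes on $X(k)$. So the substance is the reverse inclusion.

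Fix $(P_v)\in(X,\Delta)(\A_k)^{\Br}$ and a basic open neighbourhood $W=\prod_{v\in F}W_v\times\prod_{v\notin F}\CC_v(\sX,\Delta)$. Enlarge $S$ as in the setup, and absorb $F$ into $S$. Then for $v\notin S$ we only need a rational point that is Campana at $v$.

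The plan is to find a smooth $f$-fibre $C$ and a point $R\in C(k)$ with three properties:
\begin{enumerate}
\item $R$ is close to $P_v$ for $v\in S$;
\item $R$ avoids $Z_f$ in reduction at every $v\notin S$;
\item $C$ does not lie in $H$.
\end{enumerate}
Given such $C$ and $R$, Lemma~\ref{lem:localfibres} provides, for each $v\notin S$, a point of $C(k_v)$ whose reduction lies outside $H$. This is a point of multiplicity $0$, hence lies in $\CC_v$. For $v\in S$, the point $R$ itself lies in $W_v$. Therefore $C(\A_k)\cap W\neq\emptyset$.

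Since $H$ contains no fibre of $f$ and meets a general fibre in a reduced divisor of degree at most two, Lemma~\ref{lem:transfer} applies to $C$ once $C$ is general. It yields a strict rational point of $C$ in $W$, which finishes the proof.

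To produce $R$, use the hypothesis that $X(k)$ is dense in $X(\A_k)^{\Br}$. The adelic point $(P_v)$ lies in $X(\A_k)^{\Br}$, so there is $R_0\in X(k)$ close to $P_v$ for $v\in S$. Because $S$-adic neighbourhoods are open and the relevant bad loci are thin, we may move $R_0$ so that:
\begin{enumerate}
\item $R_0$ lies on a smooth $g$-fibre $C_s$ with $s\notin A$;
\item $R_0$ is not on $H$;
\item $R_0$ is not on any singular $f$-fibre;
\item the $f$-fibre through $R_0$ is general.
\end{enumerate}
Here the moving uses that rational points are dense in the open set $W_S$ (by the density hypothesis), and that the union of finitely many curves together with a thin set of fibres is a proper closed or thin condition. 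I expect to need some care at this point, since "general $f$-fibre" is a Hilbert-irreducibility-type condition rather than a Zariski-open one. It is enough, however, to avoid the finitely many $f$-fibres that are singular or that meet $H$ non-reducedly; those form a closed condition.

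Next apply Lemma~\ref{lem:twoavoid} to get $R\in C_s(k)$, arbitrarily close to $R_0$ at $S$, whose reductions avoid $Z_f$ at all $v\notin S$. Since the conditions on $R_0$ are open at $S$ (hmm: they are Zariski open, so they persist for $R$ close enough to $R_0$ after shrinking), the same conditions hold for $R$. Finally take $C=f^{-1}(f(R))$, which is a smooth conic containing the $k$-point $R$. The condition on reductions of $R$ is exactly the first alternative in Lemma~\ref{lem:localfibres}.

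The main obstacle is ensuring that the $f$-fibre through $R$ is simultaneously smooth, not inside $H$, and has $(C\cap H)_{\red}$ of degree at most two. This requires excluding a finite set of fibres. Since the excluded locus is a finite union of curves and $R$ is obtained by weak approximation on $C_s\simeq\PP^1$, we can avoid it by also approximating $R_0$ away from finitely many points of $C_s$. These are the points of $C_s$ lying over the bad $f$-values, and weak approximation on $\PP^1$ away from finitely many points is harmless.
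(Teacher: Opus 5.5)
Your proposal is correct and follows essentially the same route as the paper. You approximate $(P_v)$ by some $R_0\in X(k)$ on a smooth $g$-fibre with parameter outside $A$, move it along that fibre via Lemma~\ref{lem:twoavoid} so that its reductions avoid $Z_f$, and then apply Lemma~\ref{lem:localfibres} and Lemma~\ref{lem:transfer} to the smooth $f$-fibre through $R$. Your worry about ``general'' fibres and thin sets is unnecessary: every smooth $f$-fibre $C$ satisfies $\deg(C\cap H)=H\cdot C\le 2$, since this intersection number is the same for all fibres, so you only need to exclude the finitely many singular fibres, which is a Zariski-open condition.
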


\begin{proof} Let $S$ be the finite set of exceptional places of Lemma~\ref{lem:twoavoid}. Let $\Delta=(1-1/n)H$. For any $(P_v)\in (X,\Delta)(\A_k)^{\Br(X)}$ and any open neighbourhood $\prod_{v\in T}W_v\times\prod_{v\notin T}\CC_v(\sX,\Delta)$ of $(P_v)$, where $T\supset S$ is finite, it suffices to find a strict rational point in this neighbourhood.

 Since $X(k)$ is dense in $X(\A_k)^{\Br(X)}$, we may choose  a point $R_0\in X(k)$ lying in $W_v$ for $v\in T$ such that
its $g$-fibre is smooth with parameter outside $A$ and its $f$-fibre is
smooth. Lemma~\ref{lem:twoavoid} gives a point $R$, arbitrarily close to
$R_0$ at $T$, whose reductions avoid $Z_f$ outside $T$. 
We may assume that the $f$-fibre $C$ containing $R$ is smooth and that its intersection with $H$ has degree at most two.
The conic $C$ contains $R$ and is therefore
rational. Lemma~\ref{lem:localfibres} gives a local point on $C$ whose
section avoids $H$ at every $v\notin T$.
Applying Lemma~\ref{lem:transfer} to $C$, we obtain a strict Campana point arbitrarily close to $R$, hence in the given neighbourhood. 
%
%
\end{proof}

Turning to cubic surfaces, first we have the following approximation property.
\begin{prop}\label{prop:ordinary}
	Let $X\subset\PP^3_k$ be a smooth cubic surface with a $k$-line
	$\ell$.
The rational points of $X$ are dense in
$X(\A_k)^{\Br(X)}$. 
\end{prop}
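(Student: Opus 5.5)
The plan is to use the conic bundle structure given by the line $\ell$ and reduce to known results on conic bundles over $\PP^1_k$. Projecting from $\ell$, i.e.\ taking the pencil of planes $\Pi_t\supset\ell$, gives a morphism $f:X\to\PP^1_k$ whose fibres are the residual conics $\Pi_t\cap X-\ell$. Since $X$ is smooth, this is a conic bundle with exactly five geometrically degenerate fibres, each a pair of distinct lines meeting in a node, with no double lines. Thus $X$ is a smooth proper conic bundle surface over $\PP^1_k$ with $\le 5$ degenerate fibres.

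Next I would invoke the known theorem on conic bundles over $\PP^1$ with few degenerate fibres: by Colliot-Thélène, Sansuc and Swinnerton-Dyer (Châtelet surfaces, and more generally conic bundles with at most five degenerate geometric fibres; cf.\ also Salberger--Skorobogatov for degree four del Pezzo surfaces), the Brauer--Manin obstruction is the only obstruction to weak approximation. That is, $X(k)$ is dense in $X(\A_k)^{\Br(X)}$. For a conic bundle whose fibration has at most five singular fibres this is exactly their result; alternatively, one can argue via the fibration method using Schinzel's hypothesis, which is unconditional for degree $\le 2$ in the relevant parameters (Heath-Brown / Colliot-Thélène), or via descent to a degree four del Pezzo surface.

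Concretely, contracting a suitable set of disjoint lines inside fibres (over $\kbar$, Galois-stably) relates $X$ to a del Pezzo surface of degree four with a conic bundle structure. Since the needed result is birational-invariant for smooth proper varieties, because the Brauer group and the closure of rational points in adelic points are birational invariants, it suffices to know it for one model. Here I rely on Salberger--Skorobogatov for quartic del Pezzo surfaces with a rational point; if $X(\A_k)^{\Br}=\varnothing$ the statement is vacuous. If $X(\A_k)^{\Br}\neq\varnothing$, then $X(k)\neq\varnothing$, since $\ell\subset X$ already gives rational points. So the del Pezzo model has a rational point, and the theorem applies.

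The main obstacle is purely bibliographic and geometric bookkeeping: checking that the conic bundle from $\ell$ has at most five degenerate fibres, all reduced. The count follows from the degree-five discriminant of the residual conic family, and reducedness follows from the smoothness of $X$. One must also make sure the cited theorem is stated for weak approximation on the smooth projective model, not just the Hasse principle. I expect citing Colliot-Thélène--Sansuc--Swinnerton-Dyer (conic bundles with $\le5$ degenerate fibres) to suffice directly.
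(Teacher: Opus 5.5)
Your fallback route is the paper's argument: pass to a quartic del Pezzo surface with a $k$-point, apply Salberger--Skorobogatov, and transfer back by birational invariance. However, the step that produces the quartic del Pezzo surface fails as you describe it.

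A component of a degenerate $f$-fibre is defined over $k$ only if its fibre lies over a $k$-point of $\PP^1$ and is split, and neither need happen. Contracting any Galois-stable set of two or more disjoint lines lands in degree at least five, not four. So in general there is nothing inside the fibres of $f$ that you can contract over $k$ to reach degree four.

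The curve to contract is $\ell$ itself, which is exactly what the paper does. By adjunction $\ell^2=-1$, and $\ell$ is defined over $k$, so Castelnuovo's criterion contracts it over $k$. The resulting surface $Y$ has $K_Y^2=4$, and $-K_Y$ is ample because
\[
-K_Y\cdot C=-K_X\cdot\widetilde C+\ell\cdot\widetilde C>0
\]
for every curve $C\subset Y$ with strict transform $\widetilde C$. The image of $\ell$ is a $k$-point of $Y$, which is all Salberger--Skorobogatov needs.

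Note also that $\ell$ is a bisection of $f$, not a fibre component: with $L$ the hyperplane class, $\ell\cdot(L-\ell)=2$. Consequently $f$ does not descend to a conic bundle on $Y$, since every fibre meets $\ell$ and all their images pass through the contracted point. No conic bundle structure on $Y$ is required, so drop that claim.

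Your primary route does not stand on its own as cited. The conic-bundle results of Colliot-Th\'el\`ene--Sansuc--Swinnerton-Dyer concern Ch\^atelet surfaces, which have four degenerate fibres. The five-fibre case with a rational point comes from Salberger--Skorobogatov via a quartic del Pezzo model, so this route collapses back into the argument above. The Schinzel alternative is not unconditional either: the hypothesis is open even for a single irreducible quadratic polynomial, and here the discriminant has degree five.
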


\begin{proof}
We contract the $k$-line $\ell$ to produce a smooth del Pezzo surface $Y$
of degree four; the image of $\ell$ is a $k$-point. 
By the theorem of Salberger and Skorobogatov \cite[Theorem~0.1]{SS},
$Y$ satisfies weak
approximation with the Brauer--Manin obstruction. By birational invariance \cite[p.~1990]{HS}, the same approximation property holds on $X$. 
\end{proof}

 On a  smooth cubic surface $X$, every
 hyperplane section $H$ is reduced.  Over $\kbar$, a nonreduced section would
 be $2M+M'$ or $3M$, where $M, M'$ are lines on $X$. If $H=2M+M'$, then $$H.M=2M^2+M'.M=-2+1=-1.$$ 
 If $H=3M$,  then similarly $H.M=3M^2=-3$.  Both lead to contradictions since the intersection number of a line with any hyperplane section is always~1. 
\begin{prop}\label{prop:cubic}
Let $X\subset\PP^3_k$ be a smooth cubic surface with a $k$-line
$\ell$, and let $f$ be the conic bundle given by planes through
$\ell$. Let $H$ be a hyperplane section not containing $\ell$.

If $H\cap Z_f=\varnothing$ or there is
a second bundle $g$ satisfying Lemma~\ref{lem:twoavoid}, then, for every integer $n\ge2$, Campana
strong approximation with the Brauer--Manin obstruction holds for
$\bigl(X,(1-1/n)H\bigr)$.
\end{prop}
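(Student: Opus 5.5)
The plan is to reduce to Theorem~\ref{thm:two} when a second bundle $g$ exists, and otherwise to rerun its proof with Lemma~\ref{lem:twoavoid} omitted. First I check the standing hypotheses on $f$. Projecting from the $k$-line $\ell$ makes $f\colon X\to\PP^1_k$ a conic bundle: the residual intersection of a plane $\Pi\supset\ell$ with $X$ is a conic. Since $X$ is smooth, every degenerate fibre is a pair of distinct lines, and there are exactly five of them. So $Z_f$ is a reduced finite scheme of nodes, and the model conditions of Section~\ref{sec:criterion} hold after enlarging $S$. By Proposition~\ref{prop:ordinary}, $X(k)$ is dense in $X(\A_k)^{\Br(X)}$, and $X$ is geometrically rational.

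Next I check the conditions that $H$ must satisfy. A fibre of $f$ is a conic $Q$ with $\ell+Q=\Pi\cap X$. If $H\supseteq Q$, then $H=Q+M$ with $M$ a line, and $M\cdot\ell=H\cdot\ell-Q\cdot\ell=1-2=-1$. Here $Q\cdot\ell=2$ because $(\ell+Q)\cdot\ell=1$ and $\ell^2=-1$. A negative intersection forces $M=\ell$, so $\ell\subset H$, which contradicts the hypothesis. Hence $H$ contains no fibre. The restriction of $H$ to a general fibre $Q$ has degree $H\cdot Q=2$. It is reduced because the generic fibre meets the reduced curve $H$ (reducedness of hyperplane sections was noted just before the statement) in a reduced divisor of degree two, the characteristic being zero. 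So whenever a second bundle $g$ satisfying Lemma~\ref{lem:twoavoid} exists, Theorem~\ref{thm:two} applies verbatim and gives the claim.

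Now suppose $H\cap Z_f=\varnothing$. I redo the proof of Theorem~\ref{thm:two} without $g$. Take $(P_v)$ in the Brauer--Manin set and a basic neighbourhood with finite $T\supseteq S$. By density I choose $R\in X(k)$ in $W_v$ for $v\in T$ with smooth $f$-fibre $C$ meeting $H$ in a reduced divisor of degree at most two; this is an open condition, so it can be imposed. Then $C\simeq\PP^1_k$ because $R\in C(k)$. Lemma~\ref{lem:localfibres}, in the branch $H\cap Z_f=\varnothing$, gives for each $v\notin T$ a point $P'_v\in C(k_v)$ whose reduction avoids $H$. Such a point has multiplicity $0$, so it lies in $\CC_v(\sX,\Delta)$. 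Together with $R$ at $v\in T$, this puts $C(\A_k)\cap W\neq\varnothing$, and Lemma~\ref{lem:transfer} yields a strict rational point of $C$ in $W$.

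The main obstacle is confirming that the hypotheses of Lemma~\ref{lem:transfer} and Lemma~\ref{lem:localfibres} hold uniformly. In particular, the model conditions must hold after enlarging $S$ once, before $R$ is chosen; they do, because they depend only on $X$, $f$, $g$ and $H$. The other point to check is that $C$ is not contained in $H$, which follows from the fact that $H$ contains no fibre.
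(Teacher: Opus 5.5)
Your proposal is correct and follows the paper's proof. When $g$ exists you invoke Theorem~\ref{thm:two}; when $H\cap Z_f=\varnothing$ you pick $R$ with smooth $f$-fibre $C$, use Lemma~\ref{lem:localfibres} to get local points avoiding $H$ outside $T$, and conclude with Lemma~\ref{lem:transfer}. The only differences are cosmetic: you rule out a fibre inside $H$ via $M\cdot\ell=-1$ where the paper notes that the plane spanned by any $f$-fibre contains $\ell$, and you explicitly check reducedness of the generic restriction.
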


\begin{proof}
Let $L$ be the hyperplane class. Each $f$-fibre has class
$L-\ell$, so $H\cdot(L-\ell)=3-1=2$. Therefore, the restriction
of $H$ to a general fibre has degree two.  The plane spanned by such a fibre
contains $\ell$, hence no $f$-fibre is contained in $H$. 



Let $S$ be the set of exceptional places of Lemma~\ref{lem:localfibres}. 
For any  adelic Campana 
point $(P_v)$ orthogonal to $\Br(X)$ and any open neighbourhood $\prod_{v\in T}W_v\times\prod_{v\notin T}\CC_v(\sX,(1-1/n)H)$, where $T\supset S$ is finite, Proposition~\ref{prop:ordinary} gives a rational point $R$ in $W_v$ for every $v\in T$ such that the
$f$-fibre $C$ containing $R$ is smooth and meets $H$ in a divisor of
degree two.


If $H\cap Z_f=\varnothing$, Lemma~\ref{lem:localfibres} supplies
local points on $C$ whose sections avoid $H$ at every $v\notin T$.
By Lemma~\ref{lem:transfer}, we may approximate $R$ at the places in $T$ to obtain 
a strict Campana point.
If a second bundle satisfies
Lemma~\ref{lem:twoavoid}, apply Theorem~\ref{thm:two} instead.
\end{proof}

\begin{prop}\label{prop:twolines}
	Let $X\subset\PP^3_k$ be a smooth cubic surface containing two distinct
	$k$-lines $\ell_1,\ell_2$. If a hyperplane section $H$ does not
	contain $\ell_1$, then, for every integer $n\ge2$, Campana strong
	approximation with the Brauer--Manin obstruction holds for
	$\bigl(X,(1-1/n)H\bigr)$.
	
	In particular, if the lines intersect, this holds for every hyperplane other than
	the plane containing both lines;
	if the lines are disjoint, Campana strong approximation holds for
	\emph{every} hyperplane section $H$, without the Brauer--Manin obstruction.
\end{prop}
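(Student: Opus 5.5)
We apply Proposition~\ref{prop:cubic} with $\ell=\ell_1$ and let $f$ be the conic bundle cut out by planes through $\ell_1$. The hypothesis $\ell_1\not\subset H$ is exactly the one required there. It remains to produce a second conic bundle $g$ satisfying the hypotheses of Lemma~\ref{lem:twoavoid}, and we take $g$ to be the bundle given by planes through $\ell_2$. The density of $X(k)$ in $X(\A_k)^{\Br(X)}$ holds by Proposition~\ref{prop:ordinary}.

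The key step is to check, for each $a\in A=g(Z_f)$, that $g^{-1}(a)$ is smooth or a pair of lines defined over $k(a)$. A singular $g$-fibre is the residual pair of lines in a plane $\Pi\supset\ell_2$. The case to treat is that $\Pi$ also passes through a node $z$ of a singular $f$-fibre, and here we split according to the position of the two lines.

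\emph{Disjoint lines.} Each plane $\Pi\supset\ell_2$ meets $\ell_1$ in exactly one point, and each singular $g$-fibre has the form $M_1\cup M_2$ with $M_1,M_2$ both meeting $\ell_2$. Exactly one of $M_1,M_2$ meets $\ell_1$, since $\ell_1\cdot(L-\ell_2)=1$. That line is therefore fixed by Galois, and so is the other one. Hence every singular $g$-fibre is split over its field of definition, and the hypothesis of Lemma~\ref{lem:twoavoid} holds automatically.

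\emph{Intersecting lines.} The plane $\langle\ell_1,\ell_2\rangle$ cuts out $\ell_1+\ell_2+\ell_3$ with $\ell_3$ a $k$-line, so this fibre is split over $k$ for both bundles. For any other singular $g$-fibre $M_1\cup M_2$, the intersection numbers are $M_i\cdot\ell_1\in\{0,1\}$ with $(M_1+M_2)\cdot\ell_1=(L-\ell_2)\cdot\ell_1=0$. Hence neither $M_i$ meets $\ell_1$, and this gives no Galois distinction. We therefore need another argument: a node $z\in Z_f$ lies on some $f$-fibre line $N$ meeting $\ell_1$. If $z\in\Pi$ with $\Pi\supset\ell_2$, we expect $N$, or its partner, to meet $\ell_2$, forcing the $g$-fibre through $z$ to contain a line defined over $k(z)$, and hence over $k(a)$. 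This step is the main obstacle. If this verification fails in some configuration, the fallback is to treat instead the case $H\cap Z_f=\varnothing$ of Proposition~\ref{prop:cubic}, swapping the roles of $\ell_1$ and $\ell_2$ where possible. If $H$ contains $\ell_2$ but not $\ell_1$, then $H$ is a plane through $\ell_2$, and its residual conic meets $Z_f$ only in controlled points.

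For the ``in particular'' statements: if the lines meet, then only the plane $\langle\ell_1,\ell_2\rangle$ contains both. Any other $H$ omits one of them, and after relabelling it omits $\ell_1$. If the lines are disjoint, no plane contains both, since two skew lines span $\PP^3$, so every $H$ qualifies. In that case $X$ is $k$-rational: the map $\ell_1\times\ell_2\dashrightarrow X$ sending a pair of points to the third intersection of the joining line with $X$ is birational. Therefore $\Br(X)=\Br_0(X)$, and the Brauer--Manin condition is vacuous.
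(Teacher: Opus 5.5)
Your disjoint-lines case and the ``in particular'' deductions are sound. Your Galois argument, via the unique component meeting $\ell_1$, has the same content as the paper's remark that $\ell_1$ is a section of $g$, and your rationality argument is a valid substitute for the paper's. The gap is the intersecting case. This is exactly the case needed for Theorem~\ref{thm:main}, since $D_x,D_y,D_z$ meet pairwise.

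In that case you never verify the fibre hypothesis of Lemma~\ref{lem:twoavoid}, and the heuristic you propose cannot work. Let $q\in Z_f$ be the node of a singular $f$-fibre $N_1+N_2\ne\ell_2+\ell_3$. Then $N_1,N_2$ lie in a different $f$-fibre from $\ell_2$, so both are disjoint from $\ell_2$: neither ``$N$ nor its partner'' meets $\ell_2$. Moreover, $k(a)\subseteq k(z)$, so a line defined over $k(z)$ need not be defined over $k(a)$; your ``hence'' goes the wrong way. The fallback via $H\cap Z_f=\varnothing$ does not cover an arbitrary $H\not\supset\ell_1$, and the remark about planes through $\ell_2$ is not an argument.

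The missing step is to show that the only singular $g$-fibre through a point of $Z_f$ is the $k$-split fibre $\ell_1+\ell_3$. The paper uses the tangent plane. Every line of $X$ through $q$ lies in $T_qX$, whose section is $\ell_1+N_1+N_2$. A component through $q$ of a singular $g$-fibre must meet $\ell_2$, so it must be $\ell_1$, and the fibre is $\ell_1+\ell_3$. The remaining node $\ell_2\cap\ell_3$ lies on $\ell_3\subset g^{-1}(a_0)$.

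Your own computation also finishes the argument. Let $M_i$ be a component of a singular $g$-fibre other than $\ell_1+\ell_3$. You showed $M_i\cdot\ell_1=0$, so $M_i\cdot(N_1+N_2)=M_i\cdot(L-\ell_1)=1$ for every singular $f$-fibre $N_1+N_2$, including $\ell_2+\ell_3$. Each $N_j$ is coplanar with $\ell_1$ and hence meets it, so $M_i\ne N_j$. If $M_i$ passed through the node $N_1\cap N_2$, it would meet both $N_j$, giving $M_i\cdot(N_1+N_2)\ge2$. Hence every $g$-fibre above $A=g(Z_f)$ is smooth or equal to $\ell_1+\ell_3$, which is what Proposition~\ref{prop:cubic} requires.
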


\begin{proof} 
	Let $L$ be the hyperplane class, and let $f,g$ be the conic bundles
	associated with $\ell_1,\ell_2$.
	Suppose first that the lines are disjoint. Since no plane contains
	two disjoint lines,  we may assume $\ell_1\not\subset H$. It is clear that
	\[
	(L-\ell_1)\cdot\ell_2=(L-\ell_2)\cdot\ell_1=1,
	\]
	so $\ell_2$ and $\ell_1$ are sections of $f$ and $g$, respectively.
	A section $s$ of $g$ meets every fibre in a smooth point, since
	$dg\circ ds=\mathrm{id}$. Every singular $g$-fibre therefore has a
	smooth point over its residue field, hence its two components are defined
	over that field. The fibre hypothesis of
	Lemma~\ref{lem:twoavoid} holds, so
	Proposition~\ref{prop:cubic} yields the assertion.
	The section of $f$ shows that its generic conic is rational, so
	$k(X)\simeq k(t)(u)$ and $X$ is $k$-rational. Consequently,
	$\Br(X)=\Br_0(X)$ by birational invariance
	\cite[Corollary~6.8.7]{Poonen}.  

	Now suppose that the lines intersect.
	Write $\ell_3$ for the third line in their common plane; it is a
	$k$-line because the residual divisor has degree one. Let $a_0\in\PP^1(k)$ satisfy
	$g^{-1}(a_0)=\ell_1+\ell_3$, and set $A=g(Z_f)$. We now verify the
	fibre hypothesis of Lemma~\ref{lem:twoavoid}.
	
	Over $\overline k$, take $q\in Z_f$. If its $f$-fibre is
	$\ell_2+\ell_3$, then $q\in\ell_3$, so $g(q)=a_0$. Otherwise, write
	that $f$-fibre as $L+M$. Both lines are disjoint from $\ell_2$, since
	$\ell_2$ belongs to a different $f$-fibre. The plane containing $L$ and
	$M$ is the tangent plane $T_qX$. 
	
	If the $g$-fibre through $q$ is singular, choose a component $N$
	passing through $q$. Every line on $X$ passing through $q$ lies in
	$T_qX$. The section cut out by $T_qX$ on $X$ is
	$\ell_1+L+M$, so $N$ is one of $\ell_1,L,M$. Every component of a $g$-fibre
	meets $\ell_2$. Thus $N=\ell_1$, since $L+M$ is disjoint from $\ell_2$.
	The $g$-fibre is $\ell_1+\ell_3$.
	Consequently, every fibre of $g$ above $A$ is either smooth or the split fibre $\ell_1+\ell_3$. This verifies the
	hypotheses of Lemma~\ref{lem:twoavoid}, and the assertion follows from Proposition~\ref{prop:cubic}.
\end{proof}


\begin{prop}\label{prop:nonboundary}
Theorem~\ref{thm:main} holds for every hyperplane section $H\ne D$.
\end{prop}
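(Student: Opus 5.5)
The plan is to reduce to Proposition~\ref{prop:twolines}, using the three boundary lines $D_x,D_y,D_z$ of $X_m$. Each is a $k$-line on the smooth cubic surface $X_m$: for instance $D_x=\{x=w=0\}$ lies on $X_m$ because setting $w=x=0$ kills both $(ax^2+by^2+cz^2)w$ and $xyz$. The three lines are pairwise intersecting and all lie in the plane $w=0$. Moreover $X_m\cap\{w=0\}$ is given by $xyz=0$, so this hyperplane section is exactly $D=D_x+D_y+D_z$.

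Now let $H=X_m\cap\Pi$ be any hyperplane section with $H\neq D$, so $\Pi\ne\{w=0\}$. Then $\Pi$ cannot contain two of the lines $D_x,D_y,D_z$, since any two of them span the plane $w=0$. Hence at least two of the three lines are not contained in $H$. Relabel them so that $\ell_1\not\subset H$, and let $\ell_2$ be any other of the three lines. The pair $\ell_1,\ell_2$ then consists of two distinct intersecting $k$-lines on the smooth cubic surface $X_m$, smooth because $m\notin\{0,4abc\}$.

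With this choice, Proposition~\ref{prop:twolines} applies directly. It gives Campana strong approximation with the Brauer--Manin obstruction for $\bigl(X_m,(1-1/n)H\bigr)$ for every $n\ge2$, with strict points taken in $X_m\setminus H$, which is exactly the statement of Theorem~\ref{thm:main} for this $H$.

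The only point requiring care is that Proposition~\ref{prop:twolines} uses just the hypothesis $\ell_1\not\subset H$, and we have arranged that. Its proof in the intersecting case checks the fibre hypothesis of Lemma~\ref{lem:twoavoid} for the bundle $g$ attached to $\ell_2$, which is why both lines must be $k$-rational; here all three boundary lines are defined over $k$, so nothing further needs to be verified. The main obstacle is therefore only the bookkeeping that identifies $H=D$ as the unique excluded hyperplane section; that case is left for the Vieta-involution argument.
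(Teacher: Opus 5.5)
Your proposal is correct and is essentially the paper's proof: the paper likewise observes that the three $k$-lines $D_x,D_y,D_z$ all lie in $D$, so $H\ne D$ fails to contain at least one of them, and then applies Proposition~\ref{prop:twolines} with $\ell_1$ such a line. Your additional remark that any two of the lines span $\{w=0\}$, so that at least two of them are not contained in $H$, is correct but more than the argument needs.
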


\begin{proof}
The surface $X_m$ has three $k$-lines $D_x,D_y,D_z$, all lying in the hyperplane section $D$. Since \(H\neq D\), at least one line is not contained in $H$, and the assertion follows from Proposition~\ref{prop:twolines}.
\end{proof}

\section{The hyperplane section $H=D$}
\label{sec:vieta}

The pencil of planes through $D_x$
defines a conic bundle $f:X_m\to\PP^1_k$.
For finite $t$, the fibre is
\begin{equation*}
 C_t:\quad x=tw,\qquad
 by^2+cz^2-tyz+(at^2-m)w^2=0.
\end{equation*}
It is smooth if and only if
\begin{equation}\label{eq:smoothconic}
 (t^2-4bc)(at^2-m)\ne0.
\end{equation}
The fibre at infinity is
$ f^{-1}(\infty)=D_y+D_z.$


Let $T_0\supset \Omega_k^\infty$ be a finite set of places  containing all finite places at which one of $a,b,c$ is not a unit or $m$
is not integral. We fix the natural projective $\OO_{T_0}$-model $\sX_m$ of $X_m$ defined by \eqref{eq:projective}.

Set the three vertices of $D$ by
\[
 E=\{[1:0:0:0],[0:1:0:0],[0:0:1:0]\},\qquad
 V_m=X_m\setminus E.
\]
Let $\mathscr E$ be the closure of $E$ in $\sX_m$ and let
$\sV=\sX_m\setminus \mathscr E$. Recall that $U_m=X_m\setminus D$.

\begin{lem}\label{lem:vertices}
Let $T\supset T_0$ be a finite set of places and let $R_0\in U_m(k)$. Then there is a point $R\in U_m(k)$,
arbitrarily close to $R_0$ at every place of $T$, such that
\begin{equation}\label{eq:vertexavoidance}
 R\in\sV(\OO_v)\qquad\text{for every }v\notin T.
\end{equation}
\end{lem}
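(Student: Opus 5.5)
The plan is to move $R_0$ along a rational conic through it on which one coordinate stays fixed. We then use ordinary weak approximation on that conic, together with an explicit description of when the reduction of a point can be a vertex. Write $R_0=(x_0,y_0,z_0)$ in the affine coordinates of \eqref{eq:affine}. We consider the fibre of the conic bundle cut out by the pencil of planes through $D_y$:
\[
 C'_{y_0}:\quad y=y_0w,\qquad ax^2+cz^2-y_0xz+(by_0^2-m)w^2=0 .
\]
After replacing $R_0$ by a nearby point of $U_m(k)$ (using density of $U_m(k)$ near $R_0$, e.g.\ via the rational conics through $R_0$), we may assume that $y_0$ avoids the finitely many values for which \eqref{eq:smoothconic}, with the roles of $x$ and $y$ exchanged, fails. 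Then $C'_{y_0}$ is a smooth conic containing $R_0$, hence $C'_{y_0}\simeq\PP^1_k$, and it satisfies weak approximation.

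\textbf{Key computation (good places).} Let $v\notin T$ be a place with $y_0\in\OO_v$. Then no point of $C'_{y_0}(k_v)$ reduces into $\mathscr E$, because:
\begin{itemize}
 \item a point reducing to $[1:0:0:0]$ would force $a\equiv0$ modulo $\pi_v$, which is impossible since $v\notin T_0$;
 \item a point reducing to $[0:0:1:0]$ would force $c\equiv0$, which is impossible for the same reason;
 \item a point reducing to $[0:1:0:0]$ would require $\bar w=0$, while the relation $y=y_0w$ with $y_0$ integral forces $\bar y=0$, a contradiction.
\end{itemize}
Thus \eqref{eq:vertexavoidance} holds automatically on $C'_{y_0}$ at every $v\notin T$ with $\ord_v(y_0)\ge0$.

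\textbf{Bad places and the main obstacle.} Let $T_1$ be the finite set of places $v\notin T$ with $\ord_v(y_0)<0$. For each $v\in T_1$ I will produce a local point $Q_v\in C'_{y_0}(k_v)\cap U_m(k_v)$ lying in $\sV(\OO_v)$; this set is open in $C'_{y_0}(k_v)$. Weak approximation on $C'_{y_0}$ then gives $R\in C'_{y_0}(k)\cap U_m$ that is close to $R_0$ at $T$ and close to $Q_v$ at $v\in T_1$, which proves the lemma.

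Constructing $Q_v$ is the main obstacle. The naive Newton-polygon choices (fixing the order of $z$ and solving the quadratic in $x$) tend to produce roots reducing to one of the vertices. What I would try first is to parametrise $C'_{y_0}\otimes k_v$ from a known point and inspect the reductions of the whole family. If $C'_{y_0}(k_v)$ is entirely contained in the residue discs of $\mathscr E$ at some $v\in T_1$, I would fall back on a preliminary step: replace $R_0$ by its image under a Vieta involution, or first move along the $x$-fibre $C_{x_0}$ so that the new $y$-coordinate is $v$-integral at all places outside $T$. By strong approximation for $\A^1$ off $T$, it suffices to find on the rational conic $C_{x_0}$ a point close to $R_0$ at $T$ whose $y$-coordinate is integral at the finitely many relevant places. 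Combining the two moves, and treating $x$ and $z$ symmetrically, should reduce the problem to the good-place computation above.
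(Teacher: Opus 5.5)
You have the right framework, and it is the paper's. You take a smooth fibre through $R_0$ of the pencil of planes through a side of $D$ (the paper uses $D_x$, you use $D_y$, which is symmetric). You observe that all three vertices lie in the fibre at infinity, so \eqref{eq:vertexavoidance} holds automatically wherever the fibre parameter is $v$-integral; your good-place computation is correct. You then finish by weak approximation on the conic.

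The gap is exactly the step you call the main obstacle: you never construct $Q_v$ for $v\in T_1$, and your fallbacks do not work. Forcing the $y$-coordinate of a point of $C_{x_0}$ to be integral at \emph{all} $v\notin T$ is an integrality condition at infinitely many places. It lives on an affine conic with two points at infinity, a torsor under a one-dimensional torus. Neither weak approximation on $C_{x_0}$ nor strong approximation for $\A^1$ provides this, and moving along $C_{x_0}$ changes the set of places where $y$ has a pole. Applying a Vieta involution destroys the approximation at $T$.

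The missing idea is a Hensel lift from a non-vertex point of the fibre at infinity. For $v\in T_1$ the parameter $y_0$ reduces to $\bar\infty$, so the closure of $C'_{y_0}$ reduces to $\bar D_x+\bar D_z$. The $\FF_v$-point $[0:1:1:0]$ lies on $\bar D_x$, not on $\bar D_z$, and is not a vertex. The equation of the fibre has $x$-derivative $-1$ there (compare $\partial F/\partial x=2axw-yz$), so this point lifts.

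Explicitly, set $z=y_0$ and $r=-\ord_v(y_0)\ge1$. The fibre equation becomes $a(x/y_0)^2-x+(b+c)-m/y_0^2=0$. Its reduction $-x+(b+c)$ has a simple root, so there is a root $x_1\in\OO_v$. Then $Q_v=(x_1,y_0,y_0)\in C'_{y_0}(k_v)\cap U_m(k_v)$ has primitive coordinates $[\pi_v^rx_1:\pi_v^ry_0:\pi_v^ry_0:\pi_v^r]$, which reduce to $[0:1:1:0]\notin\mathscr E$.

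So $C'_{y_0}(k_v)$ is never contained in the vertex residue discs, and your naive Newton-polygon choice already works. Of the two roots in $x$, only the one of valuation $-2r$ reduces to the vertex $[1:0:0:0]$; the other root is integral and gives exactly this $Q_v$. With this local lift inserted, your argument coincides with the paper's proof.
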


\begin{proof}

By Proposition~\ref{prop:ordinary}, we may assume without loss of generality that $t=x(R_0)$ satisfies
\eqref{eq:smoothconic}, i.e., that the fibre $C_t$ is smooth. Let $\widetilde f$ be the extension of $f$ to the natural projective
model over $\OO_{T_0}$.  Let
\[
 T_1=T\cup\{v\notin T:\bar t_v=\bar \infty\},
\]
which is a finite set. For every $v\in T_1\setminus T$, choose 
$\bar p_v=[1:0:1:0]$, a smooth $\FF_v$-point of the fibre at infinity outside
$E$. 
Applying Hensel's lemma for $C_t$, 
we may find a local point
$p_v\in C_t(k_v)$ reducing to $\bar p_v$.

The smooth conic $C_t$ contains $R_0$, so $C_t\simeq\PP^1_k$.
Weak approximation gives $R\in C_t(k)\cap U_m(k)$ arbitrarily
close to $R_0$ at $T$ and sufficiently close to $p_v$ at
$T_1\setminus T$. Hence we may assume
$\bar R_v\notin E$ for $v\in T_1\setminus T$.
For $v\notin T_1$, we have $\bar t\neq \bar \infty$, so $\bar R_v\notin E$
automatically, since the three vertex sections lie in the fibre at
infinity. This proves~\eqref{eq:vertexavoidance}.
\end{proof}


\subsection{Growth of boundary multiplicities}

For $v\notin T_0$,
primitive integral coordinates
$[\xi:\eta:\zeta:\omega]$ for $R\in X_m(k_v)$ give the intersection multiplicities
\begin{equation}\label{eq:min}
 \begin{aligned}
 n_v(\sD_x,R)&=\min\{\ord_v(\xi),\ord_v(\omega)\},\\
 n_v(\sD_y,R)&=\min\{\ord_v(\eta),\ord_v(\omega)\},\\
 n_v(\sD_z,R)&=\min\{\ord_v(\zeta),\ord_v(\omega)\}.
 \end{aligned}
\end{equation}


The three Vieta involutions of $U_m$ are
\begin{align*}
 \sigma_x(x,y,z)&=(yz/a-x,y,z),\\
 \sigma_y(x,y,z)&=(x,xz/b-y,z),\\
 \sigma_z(x,y,z)&=(x,y,xy/c-z).
\end{align*}
These polynomial involutions preserve \eqref{eq:affine}
by Vieta's formulas; for example,
\begin{equation*}
 ax(yz/a-x)=by^2+cz^2-m.
\end{equation*}
Put
\begin{equation*}
 \Phi=\sigma_z\circ\sigma_y\circ\sigma_x.
\end{equation*}

For a point $R\in U_m(k_v)$, we may define the pole-order triple refers to its affine coordinates of $U_m$. The pole order of a coordinate $\lambda$ is
$\max\{0,-\ord_v(\lambda)\}$ and the pole order of $0$ is defined to be zero.
\begin{lem}\label{lem:initialpoles}
Let $v\notin T_0$ and
$R\in U_m(k_v)\cap\sV(\OO_v)$. Then either $R$ has affine integral
coordinates, or its pole-order triple  is one of
\begin{equation*}
 (0,r,r),\qquad(r,0,r),\qquad(r,r,0),\qquad r\ge1.
\end{equation*}
After one application of $\Phi$, the corresponding triples are
\begin{equation}\label{eq:firstcycle}
 (2r,3r,5r),\qquad(r,2r,3r),\qquad(r,r,2r).
\end{equation}
\end{lem}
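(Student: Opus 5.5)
The plan is to read off the pole orders directly from primitive integral coordinates, using the projective equation~\eqref{eq:projective}, and then to track them through $\sigma_x,\sigma_y,\sigma_z$ using the ultrametric inequality. Throughout, $v\notin T_0$, so $a,b,c$ are units and $m$ is integral.

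\emph{Initial pole orders.} Write $R=[\xi:\eta:\zeta:\omega]$ with primitive integral coordinates.
\begin{itemize}
\item If $\omega$ is a unit, the affine coordinates $\xi/\omega,\eta/\omega,\zeta/\omega$ are integral.
\item Otherwise set $r=\ord_v(\omega)\ge1$. Reducing \eqref{eq:projective} modulo $\pi_v$ gives $\bar\xi\bar\eta\bar\zeta=0$. Since $R\in\sV(\OO_v)$, the reduction is not a vertex, so exactly one of $\bar\xi,\bar\eta,\bar\zeta$ vanishes; say $\bar\xi=0$, with $\eta,\zeta$ units.
\item The coordinates $y=\eta/\omega$ and $z=\zeta/\omega$ then have pole order exactly $r$.
\item For $x$, rewrite \eqref{eq:projective} as
\[
 \xi\eta\zeta=\omega\,(a\xi^2+b\eta^2+c\zeta^2-m\omega^2).
\]
Since $\eta\zeta$ is a unit, this gives $\ord_v(\xi)\ge\ord_v(\omega)=r$. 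Hence $x=\xi/\omega$ is integral, and the triple is $(0,r,r)$.
\item The other two patterns $(r,0,r)$ and $(r,r,0)$ follow by symmetry.
\end{itemize}

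\emph{Effect of $\Phi$.} I apply $\sigma_x$, then $\sigma_y$, then $\sigma_z$, and at each step choose between two expressions for the new coordinate:
\begin{itemize}
\item the additive form, e.g.\ $x'=yz/a-x$;
\item the multiplicative Vieta form, e.g.\ $x x'=(by^2+cz^2-m)/a$.
\end{itemize}
The choice is made so that the ultrametric inequality is strict and the pole order is determined exactly. The additive form is used when one summand has strictly larger pole order than the other. The multiplicative form is used when the two summands could cancel, namely when they have equal pole orders. In that case the numerator has a unique dominant term, and the pole order of the new coordinate is that of the numerator minus that of the old coordinate.

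\emph{The three cases.}
\begin{itemize}
\item $(0,r,r)$: the additive forms give $x'$ of pole order $2r$, then $y'=x'z/b-y$ of pole order $3r$ (since $3r>r$), then $z'=x'y'/c-z$ of pole order $5r$. This yields $(2r,3r,5r)$.
\item $(r,0,r)$: in $yz/a$ and $x$ both summands have pole order $r$, so I use $xx'=(by^2+cz^2-m)/a$ instead. The term $cz^2$ dominates with pole order $2r$, so $x'$ has pole order $r$. Then $y'=x'z/b-y$ has pole order $2r$, and $z'=x'y'/c-z$ has pole order $3r$. This yields $(r,2r,3r)$.
\item $(r,r,0)$: the multiplicative form again gives $x'$ of pole order $2r-r=r$. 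For $y'$, I use $yy'=(ax'^2+cz^2-m)/b$, whose numerator has pole order $2r$, so $y'$ has pole order $r$. Then $z'=x'y'/c-z$ has pole order $2r$. This yields $(r,r,2r)$.
\end{itemize}

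The only delicate point is the possible cancellation in the additive formulas in the second and third cases. Switching to the product formula from Vieta's relation resolves it, because there the numerator always has a single dominant term.
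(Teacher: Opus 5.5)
Your proposal is correct and follows the paper's proof essentially step for step. You read off the initial triples from primitive coordinates and the projective equation. You then use the additive Vieta formula where one term dominates, and the product identities $axx'=by^2+cz^2-m$ and $byy'=ax'^2+cz^2-m$ where cancellation is possible. One small imprecision: in the $(r,0,r)$ and $(r,r,0)$ cases the summand $yz/a$ (and later $x'z/b$) has pole order at most $r$, not necessarily exactly $r$. This is harmless, since you switch to the product form in exactly those steps.
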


\begin{proof}
Choose primitive coordinates $[\xi:\eta:\zeta:\omega]$ for $R$,
with $\omega\ne0$. If $\omega$ is a unit, the affine coordinates are
integral. Otherwise, since $R\in \sV(\OO_v)$, the reduction of $R$ lies on exactly one side of the boundary triangle. If this side is $D_x$, then
$\eta,\zeta$ are units, and
\[
 \xi\eta\zeta=\omega(a\xi^2+b\eta^2+c\zeta^2-m\omega^2)
\]
implies $\ord_v(\xi)\ge\ord_v(\omega)$. Thus the pole orders are
$(0,r,r)$ for $r=\ord_v(\omega)\ge1$. The other two sides give the
remaining cases.

Write $R=(x,y,z)$ and put
\[
 x'=yz/a-x,\qquad y'=x'z/b-y,\qquad z'=x'y'/c-z.
\]
Then $\Phi(R)=(x',y',z')$. For $(0,r,r)$, each successive product
has strictly larger pole order, giving $(2r,3r,5r)$.

For $(r,0,r)$, the identity $axx'=by^2+cz^2-m$ has a unique term $cz^2$
of least valuation $-2r$. Thus
$\ord_v(x')=-r$. The products $x'z$ and $x'y'$ then have pole orders
$2r$ and $3r$, giving $(r,2r,3r)$.

For $(r,r,0)$, only $by^2$ has the least valuation $-2r$ in the same identity,
so $\ord_v(x')=-r$. At the point $(x',y,z)$, the unique term of least valuation
$-2r$ in the identity
\[
 byy'=ax'^2+cz^2-m
\]
is $ax'^2$, so $\ord_v(y')=-r$. Since $z$ is integral, the equality $z'=x'y'/c-z$ implies that $z'$ has
pole order $2r$. This gives $(r,r,2r)$ and proves
\eqref{eq:firstcycle}.
\end{proof}

\begin{lem}\label{lem:growth}
Let $v\notin T_0$. If $R\in U_m(k_v)$
has pole orders $(A,B,A+B)$ with $0<A\le B$, then:
\begin{enumerate}
	\item the boundary
	multiplicities are
	\begin{equation}\label{eq:orders}
		(n_v(\sD_x,R),n_v(\sD_y,R),n_v(\sD_z,R))=(B,A,0);
	\end{equation}
	\item $\Phi(R)$ has pole orders
	\begin{equation}\label{eq:recurrence}
		(A+2B,\,2A+3B,\,3A+5B).
	\end{equation}
\end{enumerate}
\end{lem}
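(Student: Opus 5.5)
Both parts are direct valuation computations in the affine coordinates $(x,y,z)$ of $R$, which have $\ord_v(x)=-A$, $\ord_v(y)=-B$, $\ord_v(z)=-(A+B)$. For part (1) I would write $R$ in primitive projective coordinates by scaling $(x,y,z,1)$ by $\pi_v^{A+B}$, the largest pole order. This gives
\[
[\xi:\eta:\zeta:\omega]=[\pi_v^{A+B}x:\pi_v^{A+B}y:\pi_v^{A+B}z:\pi_v^{A+B}],
\]
with valuations $(B,\,A,\,0,\,A+B)$. The tuple is primitive because $\zeta$ is a unit. Substituting into \eqref{eq:min}, and using $A,B\le A+B$, gives
\[
n_v(\sD_x,R)=\min\{B,A+B\}=B,\qquad n_v(\sD_y,R)=\min\{A,A+B\}=A,\qquad n_v(\sD_z,R)=0.
\]
This is \eqref{eq:orders}.

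For part (2) I would follow the proof of Lemma~\ref{lem:initialpoles}: put $x'=yz/a-x$, $y'=x'z/b-y$ and $z'=x'y'/c-z$, and check at each step that one term strictly dominates, so there is no cancellation. Since $a,b,c$ are units for $v\notin T_0$, multiplying or dividing by them does not change valuations.

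For $x'$, the product $yz/a$ has pole order $B+(A+B)=A+2B$, which is larger than $A$ because $B>0$. Hence $x'$ has pole order $A+2B$. For $y'$, the product $x'z/b$ has pole order $(A+2B)+(A+B)=2A+3B$, which is larger than $B$. Hence $y'$ has pole order $2A+3B$. For $z'$, the product $x'y'/c$ has pole order $(A+2B)+(2A+3B)=3A+5B$, which is larger than $A+B$. Hence $z'$ has pole order $3A+5B$. This gives \eqref{eq:recurrence}.

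Every step uses the fact that the product term has strictly larger pole order than the term subtracted from it. I do not expect any real obstacle. The only things to check are primitivity in part (1) and that $0<A\le B$ makes all the inequalities strict, which the positivity of $A$ and $B$ already ensures. The new triple $(A',B',A'+B')$ with $A'=A+2B\le B'=2A+3B$ has the same shape as before, so the lemma can be iterated.
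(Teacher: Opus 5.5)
Your proof is correct and matches the paper's argument. For part (1) you rescale $[x:y:z:1]$ by $\pi_v^{A+B}$ to get primitive coordinates with valuations $(B,A,0,A+B)$; for part (2) you check at each of the three Vieta steps that the product term strictly dominates, which spells out the comparisons the paper leaves as ``a direct computation.''
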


\begin{proof}
	Write $R=(x,y,z)$. Multiplying $[x:y:z:1]$ by $\pi_v^{A+B}$ gives primitive coordinates
	with valuations $(B,A,0,A+B)$. Formula~\eqref{eq:min} therefore gives
	\eqref{eq:orders}.
	
Since $0<A\le B$, each successive product
has strictly larger pole order. A direct computation proves
\eqref{eq:recurrence}. 
\end{proof}

\begin{prop}\label{prop:amplification}
Let $n\ge 2$ be an integer. Let $v\notin T_0$ and let $\sX_m$ be
 the natural projective $\OO_{T_0}$-model.  Choose $N\ge 1$ such that
$3^{N-1}\ge n$, and let $\varphi=\Phi^N$. Then
\[
 \varphi\bigl(U_m(k_v)\cap\sV(\OO_v)\bigr)
 \subset\CC_v(\sX_m,(1-1/n)D).
\]
\end{prop}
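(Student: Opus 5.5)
We argue pointwise. Fix $v\notin T_0$ and $R\in U_m(k_v)\cap\sV(\OO_v)$, and check that $\varphi(R)$ satisfies the local Campana conditions for each $k_v$-irreducible component of $D$. The components $D_x,D_y,D_z$ are already $k$-lines, so the conditions are simply $n_v(\sD_\bullet,\varphi(R))\in\mathcal N_n$ for $\bullet=x,y,z$. Because the model is proper, $\sX_m(\OO_v)=X_m(k_v)$ and integrality is automatic. Moreover $\Phi$ preserves $U_m$, so $\varphi(R)\in U_m(k_v)$ and all multiplicities are finite.

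The argument splits according to Lemma~\ref{lem:initialpoles}.

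\emph{Integral case.} If $R$ has affine integral coordinates, then $\Phi$ is given by polynomials in $x,y,z$ with coefficients $1/a,1/b,1/c$. These are $v$-units since $v\notin T_0$, so every iterate $\Phi^j(R)$ is again integral. For such a point the primitive coordinates are $[x:y:z:1]$ with $\omega$ a unit. By \eqref{eq:min} all three multiplicities are then $0\in\mathcal N_n$.

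\emph{Non-integral case.} Otherwise the pole-order triple of $R$ is $(0,r,r)$, $(r,0,r)$ or $(r,r,0)$ with $r\ge1$. By Lemma~\ref{lem:initialpoles}, $\Phi(R)$ has triple $(2r,3r,5r)$, $(r,2r,3r)$ or $(r,r,2r)$. Each of these has the shape $(A,B,A+B)$ with $0<A\le B$. Lemma~\ref{lem:growth}(2) maps $(A,B,A+B)$ to $(A',B',A'+B')$ with $A'=A+2B$, $B'=2A+3B$, and $A'+B'=3A+5B$. Again $0<A'\le B'$, so Lemma~\ref{lem:growth} applies to every subsequent iterate. By induction, $\Phi^j(R)$ has pole orders $(A_j,B_j,A_j+B_j)$ with $0<A_j\le B_j$ for all $j\ge1$.

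Next we bound the multiplicities from below. Lemma~\ref{lem:growth}(1) gives boundary multiplicities $(B_N,A_N,0)$ for $\varphi(R)=\Phi^N(R)$. It therefore suffices to show $A_N\ge n$, since then $B_N\ge A_N\ge n$ as well. From $A_{j+1}=A_j+2B_j\ge3A_j$ and $A_1\ge r\ge1$ we get $A_N\ge 3^{N-1}A_1\ge3^{N-1}\ge n$. So every multiplicity lies in $\{0\}\cup\ZZ_{\ge n}$, and $\varphi(R)\in\CC_v(\sX_m,(1-1/n)D)$.

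\emph{Main obstacle.} The delicate point is the applicability of Lemma~\ref{lem:growth}, which needs $0<A\le B$ at every stage. This requires checking that each of the three starting triples in \eqref{eq:firstcycle} satisfies $A\le B$ with third entry exactly $A+B$, and that the recurrence preserves both the inequality and the additive shape $(A,B,A+B)$. Both checks are immediate: $(2r,3r,5r)$, $(r,2r,3r)$ and $(r,r,2r)$ are of this shape, and $2A+3B\ge A+2B$ holds trivially.

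A secondary point is that \eqref{eq:min} requires primitive integral coordinates. This is exactly what the proof of Lemma~\ref{lem:growth}(1) supplies, using the normalization by $\pi_v^{A+B}$.
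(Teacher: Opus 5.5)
Your proof is correct and follows essentially the same route as the paper. You split into the integral case, where the Vieta involutions preserve integrality and all multiplicities vanish, and the non-integral case, where Lemma~\ref{lem:initialpoles} puts $\Phi(R)$ in the shape $(A,B,A+B)$ with $1\le A\le B$ and Lemma~\ref{lem:growth} at least triples the smallest entry at each further step, giving multiplicities $(B_N,A_N,0)$ with $A_N\ge 3^{N-1}\ge n$; your extra checks (the components of $D$ are $k$-lines, the model is proper, multiplicities are finite on $U_m$) are harmless details the paper leaves implicit.
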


\begin{proof}
Let $R\in U_m(k_v)\cap\sV(\OO_v)$. If its affine coordinates are
integral, they remain integral under the Vieta involutions, so all
boundary multiplicities of $\varphi(R)$ are zero. Otherwise,
Lemma~\ref{lem:initialpoles} gives pole orders $(A,B,A+B)$ after the
first application of $\Phi$, with $1\leq A\leq B$.
Lemma~\ref{lem:growth} preserves this form and at least triples the
smallest entry with each further application of $\Phi$. After $N$ applications, this entry is therefore at
least $3^{N-1}A\ge n$. By \eqref{eq:orders}, the two nonzero boundary
multiplicities are then at least $n$, proving the Campana
conditions.
\end{proof}


\subsection{Proof of the theorem}
\label{sec:proof}

\begin{proof}[Proof of Theorem~\ref{thm:main}]
For $H\ne D$, apply Proposition~\ref{prop:nonboundary}. Now assume
that $H=D$. Let $T_0$ and the $\OO_{T_0}$-model $\sX_m$ be as above. 

Fix $\varphi=\Phi^N$ as in Proposition~\ref{prop:amplification}.
For an adelic Campana point
$(P_v)$ orthogonal to $\Br(X_m)$, choose an arbitrary open neighbourhood $\prod_{v\in T}\mathcal N_v\times\prod_{v\notin T}\CC_v(\sX_m,(1-1/n)D)$ of $(P_v)$ for some $T\supset T_0$.
Proposition~\ref{prop:ordinary} gives a rational point $A$ such that $A\in\mathcal N_v$ at every $v\in T$ and
$R_0=\varphi^{-1}(A)\in U_m(k)$.


By the continuity of $\varphi$, we may choose neighbourhoods of $R_0$ at
the places in $T$ such that their images lie in $\mathcal N_v$.
Lemma~\ref{lem:vertices} gives a point
$R\in U_m(k)$ lying in these neighbourhoods such that $R\in\sV(\OO_v)$ for
every $v\notin T$. Put $P=\varphi(R)$. At each $v\in T$, $P$ lies in
$\mathcal N_v$.
Proposition~\ref{prop:amplification} gives the remaining Campana
conditions for $v\notin T$,
which completes the proof.
\end{proof}

\begin{proof}[Proof of Corollary~\ref{cor:boundary}]
For $M=\max\{n_x,n_y,n_z\}$, one has
$(1-1/M)D\ge\Delta_{\boldsymbol n}$.
Since $\Br(X_m)/\Br_0(X_m)$ is finite, the corollary then follows from
Theorem~\ref{thm:main} with weight $M$ and
Lemma~\ref{lem:monotonicity}\,(2).
\end{proof}

%


\begin{rmk}\label{rem:finiteweights}
The approximation theorem  for generalized Markoff surfaces proved above concerns finite weights. 
Both Vieta iterations and the rational-conic
arguments are similarly restricted to finite weights.
These methods do not give approximation results for infinite weights.
Indeed, the failures of integral approximation recalled in the
introduction show that such an extension is impossible in general.
\end{rmk}

\needspace{8\baselineskip}

\end{document}